\documentclass[11pt]{article}

\usepackage{authblk}
\usepackage{currfile}
\usepackage{floatrow}
\usepackage{amsmath,amsthm,verbatim,amssymb,amsfonts,amscd}

\usepackage{xcolor}
\usepackage{graphicx,tikz,caption,subfig}
\usepackage{mathrsfs}
\usepackage{bm}
\usepackage{float}

\usepackage{setspace}
\usetikzlibrary{arrows,shapes,positioning}
\usetikzlibrary{decorations.markings}
\tikzstyle arrowstyle=[scale=1]
\tikzstyle directed=[postaction={decorate,decoration={markings, mark=at position 0.65 with {\arrow[arrowstyle]{stealth}}}}]
\tikzstyle redirected=[postaction={decorate,decoration={markings, mark=at position 0.5 with {\arrow[arrowstyle]{stealth}}}}]

\usepackage[shortlabels]{enumitem}
\newtheorem{theorem}{Theorem}[section]

\newtheorem{lemma}[theorem]{Lemma}

\newtheorem{claim}{Claim}

\newcommand{\Z}{\mathbb{Z}}

\newcommand{\JCTB}{{\it J. Combin. Theory Ser. B}}

\newcommand{\JGT}{{\it J. Graph Theory}}

\newcommand{\DM}{{\it Discrete Math.}}

\newcommand{\SIAMDM}{{\it SIAM J. Discrete Math.}}

\newcommand{\EJC}{{\it European J. Combin.}}

\begin{document}

\title{Modulo $k$-orientations of highly edge-connected signed graphs}

\author{
You Lu\thanks{School of Mathematics and Statistics, Northwestern Polytechnical University,  Xi'an, Shaanxi 710129, China. Email: luyou@nwpu.edu.cn. Partially supported by National Natural Science Foundation of China (No. 12671418).},
~~Zhengke Miao\thanks{School of Mathematics and Statistics \& Key Laboratory of Analytical Mathematics and Applications (Ministry of Education), Fujian Normal University, Fuzhou, Fujian 350007, China. Email: zkmiao@jsnu.edu.cn. Partially supported by National Natural Science Foundation of China (No. 12431013) and the National Key R\&D Program of China (No. 2024YFA1013900).},
~~Zhenyu Tian\thanks{School of Mathematics and Statistics, Northwestern Polytechnical University,  Xi'an, Shaanxi 710129, China. Email: tianzhenyu@mail.nwpu.edu.cn.}, 
~~Cun-Quan Zhang\thanks{Department of Mathematics, West Virginia University, Morgantown, WV 26506, USA. Email:  cqzhang@mail.wvu.edu.}
}

\date{}
\maketitle

\begin{abstract}
A modulo $k$-orientation of a signed graph is an orientation in which the outdegree and indegree are congruent modulo $k$ at every vertex. We show that, for every odd integer $k\ge3$, every essentially $k$-unbalanced $(6k-10)$-edge-connected signed graph admits a modulo $k$-orientation. This improves the edge-connectivity bound of Zhu [\JCTB, 112 (2015): 93--103] from $6k-7$ to $6k-10$.

\medskip
\noindent \textbf{Keywords:} {\small signed graph, modulo orientation, edge connectivity}
\end{abstract}

\section{Introduction}

All graphs considered in this paper are finite and may have parallel edges and loops. For notation and terminology not defined here, we follow Bondy and Murty~\cite{BM2008} and Zhang~\cite{Zhang1997}.

A \emph{signed graph} $(G,\sigma)$ is a graph $G$ together with a {\em signature} $\sigma\colon E(G)\to\{1,-1\}$. We write $G$ for $(G,\sigma)$ when the signature is clear, and use $\sigma_G$ when necessary. An edge $e$ is \emph{positive} if $\sigma(e)=1$ and \emph{negative} otherwise. A signed graph with no negative edges is called an {\em ordinary} graph. Let $G^+$ and $G^-$ denote the spanning subgraphs of $G$ consisting of all positive and all negative edges, respectively. \emph{Switching} at a vertex reverses the signs of its incident non-loop edges. Two signed graphs are \emph{equivalent} if one can be obtained from the other by a sequence of switchings. The \emph{negativeness} of $G$ is
$$
\epsilon(G)=\min\{|E(G_1^-)|:G_1\text{ is equivalent to }G\}.
$$

Every edge of $G$ consists of two half-edges, one incident with each end.  
An \emph{orientation} $D$ of $G$ assigns a direction to each half-edge such that a positive edge has one half-edge directed away from its end and the other directed towards its end, as shown in 
\begin{tikzpicture}[scale=0.8]
\path (0.8,0) coordinate (2);\draw [fill=black] (2) circle (0.08cm);
\path (-0.8,0) coordinate (1);\draw [fill=black] (1) circle (0.08cm);
\draw[directed, line width=0.85pt] (1) -- (0,0);
\draw[directed,  line width=0.85pt] (0,0) -- (2);
\end{tikzpicture}
 or
  \begin{tikzpicture}[scale=0.8]
\path (0.8,0) coordinate (1);\draw [fill=black] (1) circle (0.08cm);
\path (-0.8,0) coordinate (2);\draw [fill=black] (2) circle (0.08cm);
\draw[directed,  line width=0.85pt] (1) -- (0,0);
\draw[directed, line width=0.85pt] (0,0) -- (2);
\end{tikzpicture}, whereas a negative edge has both half-edges directed either away from or toward their ends, as shown in 
 \begin{tikzpicture}[scale=0.8]
\path (0.8,0) coordinate (2);\draw [fill=black] (2) circle (0.08cm);
\path (-0.8,0) coordinate (1);\draw [fill=black] (1) circle (0.08cm);
\draw[directed, dotted, line width=0.85pt] (1) -- (0,0);
\draw[directed, dotted, line width=0.85pt] (2) -- (0,0);
\end{tikzpicture}
 or
 \begin{tikzpicture}[scale=0.8]
\path (0.8,0) coordinate (2);\draw [fill=black] (2) circle (0.08cm);
\path (-0.8,0) coordinate (1);\draw [fill=black] (1) circle (0.08cm);
\draw[directed, dotted, line width=0.85pt] (0,0)--(1);
\draw[directed, dotted, line width=0.85pt]  (0,0)--(2);
\end{tikzpicture}. 
Such a negative edge is called a \emph{sink edge} in the former case and a \emph{source edge} in the latter.
The \emph{outdegree} $d_D^+(v)$ and \emph{indegree} $d_D^-(v)$ are the numbers of half-edges directed away from and toward $v$, respectively.  The \emph{boundary} of $D$ at $v$ is the integer
$$
\partial_D(v)=d_D^+(v)-d_D^-(v).
$$
Let $k$ be a positive integer. An orientation $D$ is called a \emph{modulo $k$-orientation} of $G$ if
$$
 \partial_D(v)\equiv0 \pmod{k}\quad \forall v\in V(G).
$$
Clearly, the existence of a modulo $k$-orientation is invariant under switching.

The subject of modulo $k$-orientations is motivated by and closely related to several mainstream topics in graph theory, including Tutte's integer flow theory and related flow problems \cite{DKKS10,FLXZZ2008,Jaeger79,Seymour81,Steffen10}, Jaeger's modulo orientation problems \cite{HLWZ,Jaeger84,JLPT,Thomassen12}, circular coloring and flow problems \cite{EMT16,PZ03,RZ,Steffen15,Zhu2001}, and Bouchet's $6$-flow conjecture and related flow problems for signed graphs \cite{Bouchet83,DLLLZZ,DNS25,HLLL25,MS15,Nurse26}.

 For even $k\ge2$, an ordinary graph admits a modulo $k$-orientation if and only if every vertex has even degree. For odd $k\ge 3$, Lov\'asz et al.~\cite{LTWZ} showed that sufficiently high edge connectivity guarantees a modulo $k$-orientation.

\begin{theorem}[\cite{LTWZ}]
\label{thm: LTWZ}
Let $k\ge3$ be an odd integer. Every $(3k-3)$-edge-connected graph admits a modulo $k$-orientation.
\end{theorem}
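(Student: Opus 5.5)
The plan is to follow the Lov\'asz--Thomassen--Wu--Zhang strategy: reformulate the statement in terms of $\mathbb{Z}_k$-boundaries and prove a stronger, inductive statement in which the prescribed boundary is arbitrary and one vertex carries extra freedom. A graph $G$ admits a modulo $k$-orientation if and only if it has an orientation $D$ with $\partial_D(v)\equiv \beta(v)\pmod k$ for the specific function $\beta\equiv 0$. We generalize this to arbitrary $\beta\colon V(G)\to\Z_k$ with $\sum_v\beta(v)\equiv|E(G)|\pmod 2$ suitably interpreted. Since $k$ is odd, $2$ is invertible mod $k$, so the parity constraint can be absorbed: it suffices that $\sum_v \beta(v)\equiv 0 \pmod k$.

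The strengthened statement I would prove concerns a graph $G$ with a distinguished vertex $z_0$ and a preorientation of the edges at $z_0$. It says: if $|V(G)|\ge 3$, $d(z_0)\le 2k-2+|\beta(z_0)|_k$ (with an appropriate representative), and every edge cut separating two vertices other than $z_0$ has size at least $2k-2+|\beta|$-type bounds, then the preorientation extends to a $\beta$-orientation. Here a cut $\delta(A)$ with $z_0\notin A$ is required to satisfy $d(A)\ge 2k-2+|\tau(A)|$, where $\tau(A)\in\{-(k-1),\dots,k-1\}$ is the representative of $\sum_{v\in A}\beta(v)$ with the parity of $d(A)$. A $(3k-3)$-edge-connected graph satisfies this after we pick any $z_0$ and any preorientation of a suitable subset of edges at $z_0$: since $|\tau|\le k-1$, the bound $3k-3\ge 2k-2+|\tau|$ holds, and we simply delete or split off excess edges at $z_0$ first.

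The induction runs on $|V|+|E|$ through the standard reductions. First, if some nontrivial cut is tight with respect to its bound, contract each side in turn and combine the two orientations; this is a routine step. Otherwise all nontrivial cuts have slack. If $z_0$ has an undirected edge available, or if the boundary condition allows it, we lift a pair of edges $z_0u, uw$ to a single edge $z_0w$, or delete an edge and adjust $\beta$, preserving the cut conditions because of the slack. When $d(z_0)$ is small we instead pick a neighbour and merge it into $z_0$. Small cases, such as $|V|=3$ or parallel classes of size at least $k-1$ (which can be contracted, since $k-1$ parallel edges realize any boundary difference), are checked directly.

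I expect the main obstacle to be the step with no tight cut, where every edge at $z_0$ is already preoriented. Here one must choose an edge $e=z_0u$ and a pair to lift or a vertex to absorb, and then verify that every cut containing $u$ still meets the $2k-2+|\tau|$ bound after $\beta$ is modified. I would first try orienting one undirected edge $uv$ away from $u$ and updating $\beta(u),\beta(v)$, then splitting; the delicate part is the bookkeeping of how $|\tau(A)|$ changes by $\pm1$ against how $d(A)$ drops by $1$ or $2$. This case analysis is where the exact constant $2k-2$ is forced, and the reason the full connectivity requirement comes out as $3k-3$.
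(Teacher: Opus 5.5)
There is a genuine gap: your proposal sets up the right strengthened statement but never proves it. The paper does not reprove this theorem either. It cites \cite{LTWZ}, whose key result is the extension lemma quoted as Lemma~\ref{thm: extension}, and the theorem follows from that lemma by the device the paper uses in the proof of Theorem~\ref{thm: strengthened} when $G^+$ is $(3k-3)$-edge-connected. So your outline has the right shape, but all the content is in the lemma.

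The only reduction you describe correctly is contraction of a tight nontrivial cut. After that step, every $A$ with $z_0\notin A$, $|A|\ge2$, $|V\setminus A|\ge2$ satisfies $d(A)\ge 2k+|\tau(A)|$, and the difficulty moves to tight singletons. Suppose every neighbour $u$ of $z_0$ has $d(u)=2k-2+|\tau(u)|$. Then each of your moves fails at $u$:
\begin{itemize}
\item Lifting $z_0u,uw$ lowers $d(u)$ by $2$ and leaves $\tau(u)$ unchanged, so it violates (iii) at $\{u\}$.
\item Deleting a preoriented edge $z_0u$ and shifting $\beta(u)$ lowers $d(u)$ by $1$ while $|\tau(u)|$ moves by $\pm1$. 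It fails whenever the prescribed direction raises $|\tau(u)|$, and in particular always fails when $\tau(u)=0$.
\item Merging $u$ into $z_0$ is never admissible once $|V|\ge4$. The merged vertex has degree $d(V\setminus\{z_0,u\})\ge 2k+|\tau(V\setminus\{z_0,u\})|$, which violates (ii).
\end{itemize}
This configuration really occurs. Take $k=3$ and $\beta\equiv0$. Let $z_0$ have two edges directed to $x_1,x_2$ and two directed from $y_1,y_2$. Join each of $x_1,x_2,y_1,y_2$ by three further edges to twelve distinct vertices of a large complete graph. All hypotheses hold, every nontrivial cut has slack, there are no parallel edges, and none of your moves applies at any neighbour of $z_0$. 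Handling this situation is the actual work in \cite{LTWZ}. Your last paragraph only says what you would try there, so the theorem is left unproved.

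Two further points need fixing.

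First, $\tau(A)$ must range over $[-k,k]$, not $[-(k-1),k-1]$. If $\beta(A)\equiv0$ and $d(A)$ is odd, the smaller range contains no representative of the right parity. Once $|\tau(A)|=k$ is allowed, your inequality ``$3k-3\ge 2k-2+|\tau(A)|$'' is simply false. What rescues the bound is parity: $d(A)-|\tau(A)|$ is even, so $d(A)\ge3k-3$ forces $d(A)\ge2k-2+|\tau(A)|$. This is \eqref{eq: parity bound}, and it is the reason the constant is $3k-3$ rather than $3k-2$.

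Second, the specialisation cannot be done by deleting edges at a chosen vertex $z_0$. A deleted oriented edge lowers by one every cut through its other end. Such a cut can drop to $3k-4$ with $|\tau|=k$, which is below $2k-2+|\tau|$. Also, a vertex of $G$ may have degree far above $3k-2$, so it cannot serve directly as $z_0$. Instead, add a new isolated vertex $z_0$ with $\beta(z_0)=0$ and empty preorientation. Then (ii) is trivial and (iii) is exactly \eqref{eq: parity bound}.
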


For signed graphs, high edge connectivity alone does not guarantee the existence of a modulo $k$-orientation. Let $k\ge3$ be an odd integer. If $|E(G^-)|$ is odd and less than $k$, then every orientation $D$ of $G$ satisfies
$$
\sum_{v\in V(G)} \partial_D(v)=2(2s-|E(G^-)|)\not\equiv0\pmod{k},
$$
where $s$ is the number of sink edges.  Motivated by this, Zhu~\cite{Zhu} defined a signed graph to be \emph{essentially $k$-unbalanced} if every equivalent signed graph has either an even number of negative edges or at least $k$ negative edges. Being essentially $k$-unbalanced is a necessary condition for the existence of a modulo $k$-orientation.
 
Theorems \ref{thm: WYZZ} and \ref{thm:Zhu} below show that essential $k$-unbalancedness is sufficient when the edge connectivity is sufficiently high.
   
\begin{theorem}[\cite{WYZZ}]
\label{thm: WYZZ}
Every essentially $3$-unbalanced $8$-edge-connected signed graph admits a modulo $3$-orientation.
\end{theorem}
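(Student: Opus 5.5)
We would prove the statement by contradiction, taking a minimal counterexample and reducing to ordinary graphs via a strengthened group-connectivity statement. Since a modulo $3$-orientation of $G$ is a nowhere-zero $\mathbb{Z}_3$-flow with all values $\pm1$, which is the same as a nowhere-zero $\mathbb{Z}_3$-flow, we work with flows. We replace the statement by a stronger, inductive one about \emph{$\beta$-orientations}. For $\beta\colon V(G)\to\mathbb{Z}_3$ with $\sum_v\beta(v)\equiv |E(G^-)|$-parity-compatible, we want an orientation $D$ with $\partial_D(v)\equiv\beta(v)\pmod 3$. Here the necessary compatibility is $\sum_v\beta(v)\equiv 2(2s-|E(G^-)|)\pmod 3$ for some admissible $s$.

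Step 1 uses switching equivalence to fix a representative $G$ with the minimum number $\epsilon(G)$ of negative edges. If $\epsilon(G)=0$, then $G$ is ordinary, and the result follows from Theorem~\ref{thm: LTWZ} with $k=3$, since $8\ge 3k-3=6$. The $\beta$-version holds as well: the same paper shows $6$-edge-connected graphs are $\mathbb{Z}_3$-connected. Otherwise essential $3$-unbalancedness gives $\epsilon(G)$ even or $\epsilon(G)\ge3$. By minimality of $\epsilon$, every edge cut $\delta(X)$ contains at most $|\delta(X)|/2$ negative edges, since otherwise switching on $X$ would reduce the count.

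Step 2 is the core reduction. We pair negative edges and cancel them. Take two negative edges $e_1=u_1v_1$ and $e_2=u_2v_2$. We orient them so that one is a sink edge and one is a source edge, giving total boundary contribution $0$. We then transfer the effect to prescribing boundaries $\pm1$ at the four ends, and delete the two edges. The remaining ordinary graph must still be highly connected. Since $G$ is $8$-edge-connected, deleting a bounded number of negative edges leaves at least $6$-edge-connectivity only if few are removed. So instead we use splitting-off (Mader's theorem) to lift pairs of edges while preserving $8$-edge-connectivity, reducing to the configuration where all negative edges can be handled within a bounded structure. Then $\mathbb{Z}_3$-connectivity of a $6$-edge-connected ordinary remainder absorbs any boundary demand $\beta$. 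When $\epsilon(G)$ is even, all negative edges pair up. When $\epsilon(G)\ge3$ is odd, three negative edges are oriented with $s$ sinks chosen so that $2(2s-3)\equiv0\pmod3$, i.e.\ $s=0$ or $3$; with $s=0$ the sum is $-6\equiv0$. Their ends receive $\pm$ adjustments, and the rest is handled by the ordinary graph.

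The main obstacle is Step 2. We must remove or neutralize all negative edges while keeping the ordinary part $6$-edge-connected, since $\mathbb{Z}_3$-connectivity needs that. A global deletion of $\epsilon(G)$ edges can destroy connectivity. We would first try to contract a maximal $\mathbb{Z}_3$-connected subgraph of $G^+$, using the standard fact that $\mathbb{Z}_3$-connectivity is closed under contraction. This leaves a minimal counterexample in which $G^+$ has no nontrivial $\mathbb{Z}_3$-connected subgraph. We then use Nash-Williams to find $4$ edge-disjoint spanning trees from $8$-edge-connectivity. Negative edges are paired along tree paths, and each pair is converted into a positive edge by switching along the path, which is legitimate because a path of positive edges carries no sign. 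This bounds the loss of connectivity, and the counting in a minimal $8$-edge-connected counterexample closes the argument.
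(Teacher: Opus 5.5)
There is a genuine gap, and it sits exactly where you place the main obstacle. None of the remedies you list removes it.

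After switching so that $|E(G^-)|=\epsilon(G)$, you only know $d_{G^+}(X)\ge d_{G^-}(X)$ for every $X$. So $8$-edge-connectivity gives merely that $G^+$ is $4$-edge-connected: an $8$-edge cut may consist of $4$ positive and $4$ negative edges. Orienting the negative edges so that their total boundary is $0 \pmod 3$ is easy as soon as $\epsilon(G)\ge 2$. The entire difficulty is the resulting $\beta$-orientation of $G^+$. $\mathbb{Z}_3$-connectivity is known only for $6$-edge-connected graphs; for $4$-edge-connected graphs even the existence of a nowhere-zero $3$-flow is Tutte's open conjecture. The part of your argument that actually works is the first paragraph of the proof of Claim~\ref{cl: small cut}. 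It succeeds when $G^+$ is $6$-edge-connected, which is automatic if $G$ is $11$-edge-connected. Made rigorous, it therefore reproduces Zhu's bound $6k-7=11$, not $8$. The whole content of the statement is the case where $G^+$ has a cut of size $4$ or $5$, and you do not handle it.

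None of the proposed fixes handles it either.
\begin{itemize}
\item[(i)] Switching cannot turn two negative edges into a positive one. $\epsilon(G)$ is already minimum, and switching at the vertices of a path flips every edge of the coboundary of that vertex set, not just $e_1$ and $e_2$.
\item[(ii)] If you mean path-lifting (replacing $e_1\cup P\cup e_2$ by a single positive edge, which does lift orientations), it consumes the edges of $P$. A chain crossing a cut twice contributes no new edge to that cut; for instance, take two negative edges of a $4+4$ cut joined by a path on one side. So the resulting ordinary graph can have $4$-edge cuts. You would need a pairing that is good for all small cuts simultaneously, and nothing in the proposal provides it.
\item[(iii)] Lemma~\ref{lem:Mader} does not let you choose the pair, and splitting a positive edge with a negative edge removes no negative edge. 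It lowers the degree of the split vertex by $2$, so $8$-edge-connectivity may fail there unless the induction hypothesis allows an exceptional vertex, as $V_0$ does in Theorem~\ref{thm: strengthened}. It may also reduce $\epsilon$ by $2$, e.g.\ from $3$ to $1$.
\item[(iv)] Contracting maximal $\mathbb{Z}_3$-connected subgraphs of $G^+$ is a legitimate reduction, but you draw no consequence from it. ``The counting closes the argument'' is not an argument.
\end{itemize}

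What is missing is a way to treat a small positive cut locally. In the paper's proof for $k\ge5$, one proceeds as follows:
\begin{itemize}
\item reduce a minimal counterexample to a loopless $(6k-9)$-regular graph;
\item take a minimal $A$ with $d_{G^+}(A)\in\{3k-5,3k-4\}$ and orient $G/A$ by minimality;
\item re-orient the negative edges inside $A$ (Lemma~\ref{lem: star}) so that the vertices of degree $3k-4$ receive nonzero prescribed boundary;
\item extend into $A$ via Lemma~\ref{thm: extension}, using a pre-oriented root of degree $3k-2$.
\end{itemize}
The paper does not carry this out for $k=3$, since Lemma~\ref{lem: star} requires $k\ge5$; it simply cites the result. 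Your proposal would need its own substitute for this step.
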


\begin{theorem}[\cite{Zhu}]
\label{thm:Zhu}
Let $k\ge3$ be an odd integer. Every essentially $k$-unbalanced $(6k-7)$-edge-connected signed graph admits a modulo $k$-orientation.
\end{theorem}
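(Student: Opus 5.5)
Since Theorem~\ref{thm:Zhu} is Zhu's result, quoted from \cite{Zhu}, I sketch how I would reprove it, taking Theorem~\ref{thm: LTWZ} as a black box. The plan is to reduce the signed problem to an ordinary one. First, choose an equivalent signature that minimizes $|E(G^-)|$, so that $|E(G^-)|=\epsilon(G)$; this is harmless because modulo $k$-orientability is invariant under switching. Minimality has a standard consequence: for every vertex set $X$, at most half of the edges of $\delta(X)$ are negative, since otherwise switching at $X$ would lower the count.

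Next, split according to $\epsilon(G)$.

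\emph{Small case, $\epsilon(G)<k$.} Essential $k$-unbalancedness forces $\epsilon(G)$ to be even. Pair up the negative edges and route each pair through a short connecting structure. Then use the high edge-connectivity (the spare connectivity beyond $3k-3$) to find edge-disjoint paths joining the ends of paired negative edges. Lifting along these paths turns each pair into a configuration whose orientation can be fixed by hand. What remains is an ordinary graph that is still $(3k-3)$-edge-connected, to which Theorem~\ref{thm: LTWZ} applies.

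\emph{Large case, $\epsilon(G)\ge k$.} Pick $k$ negative edges, or more precisely an appropriate odd/even number of them, to absorb the global parity obstruction $2(2s-|E(G^-)|)$. The sink/source choices for these edges can then be set so that the total boundary is divisible by $k$.

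To make both cases rigorous, I would prove a stronger, inductive statement in the spirit of \cite{LTWZ}. Prescribe a boundary function $\beta\colon V\to\Z_k$ with $\sum\beta\equiv 2(2s-|E(G^-)|)$ realizable, allow one special vertex $z_0$ of small degree whose incident edges are pre-oriented, and require edge-connectivity $6k-7$ only across cuts not separating $z_0$ in a bad way. Then argue by a minimal counterexample. The steps, in order, are:
\begin{enumerate}
\item Contract nontrivial $(6k-7)$-edge cuts and use induction on both sides.
\item Lift a pair of edges at a high-degree vertex, choosing the pair so that the negativity structure is preserved.
\item Delete a vertex once its incident orientation is forced.
\end{enumerate}
The constant $6k-7=2(3k-3)-1$ arises naturally: after splitting off the negative edges, roughly half the connectivity remains for the ordinary part.

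The main obstacle is lifting and splitting negative edges while keeping two properties at once: the ordinary part must stay $(3k-3)$-edge-connected, and the "essentially $k$-unbalanced" condition must survive the operation (in particular, an even or large negative count must remain after switching). I would first try Mader's splitting theorem on the ordinary part to control connectivity, and then handle the parity of the negative edges by a careful choice of which pair to lift, using the half-cut bound from minimal negativeness.
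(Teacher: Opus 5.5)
Your proposal is a plan rather than a proof. The ``stronger inductive statement'' is never formulated, and the concrete mechanisms you describe do not work as stated.

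\textbf{Where it breaks.}
\emph{Small case.} The spare connectivity beyond $3k-3$ lives in $G$, not in $G^+$. A cut of size $6k-7$ may contain $3k-4$ negative edges, so $d_{G^+}(X)$ can equal $3k-3$ exactly. Deleting connecting paths, which may cross a cut several times, can then destroy the $(3k-3)$-edge-connectivity you need afterwards.

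\emph{Large case.} Choosing sink/source orientations for the negative edges only makes the \emph{total} boundary divisible by $k$. Vertices incident with negative edges still carry nonzero local demands $-\partial_{D_0}(v)$. Theorem~\ref{thm: LTWZ} as stated (only $\beta\equiv0$) cannot absorb these.

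\emph{Induction.} The induction hits the obstacle you flag but leave open. By Lemma~\ref{lem:splitting-negativeness}, a splitting can lower the negativeness by $2$, e.g.\ from $k$ to $k-2$, which destroys essential $k$-unbalancedness. ``A careful choice of which pair to lift'' is not an argument. The paper's minimal-counterexample proof of Theorem~\ref{thm: strengthened} splits freely only after first establishing $\epsilon(G)\ge k+1$ via Lemma~\ref{lem:few}.

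\textbf{The missing idea: no induction is needed.}
You already noted that after switching to $|E(G^-)|=\epsilon(G)$, at most half of every cut is negative. Hence $d_{G^+}(X)\ge\lceil(6k-7)/2\rceil=3k-3$, so $G^+$ itself is $(3k-3)$-edge-connected.

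Replace your black box by the boundary version of the Lov\'asz--Thomassen--Wu--Zhang theorem: a $(3k-3)$-edge-connected graph has a $\beta$-orientation for every $\Z_k$-boundary $\beta$. This is Lemma~\ref{thm: extension} applied after adding an isolated vertex, exactly as in the first paragraph of the proof of Claim~\ref{cl: small cut}.

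Orient $G^-$ by $D_0$ with exactly $p$ source edges, where $4p\equiv2\epsilon(G)\pmod k$ and $0\le p\le\epsilon(G)$. If $\epsilon(G)$ is even, take $p=\epsilon(G)/2$. Otherwise essential $k$-unbalancedness forces $\epsilon(G)\ge k$, and the unique such $p\in[0,k-1]$ works.

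Then $\beta=-\partial_{D_0}$ is a $\Z_k$-boundary. A $\beta$-orientation of $G^+$ (positive loops removed) combined with $D_0$ is a modulo $k$-orientation of $G$.

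In the paper itself the statement is only quoted. It is also subsumed by Theorem~\ref{thm: main}, with Theorem~\ref{thm: WYZZ} covering $k=3$, since $6k-7\ge 6k-10$ and $11\ge 8$.
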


In this paper, we generalize Theorem~\ref{thm: WYZZ} to all odd integers $k\ge 3$ and improve the edge-connectivity bound in Theorem \ref{thm:Zhu} as follows.

\begin{theorem}
\label{thm: main}
Let $k\ge3$ be an odd integer. Every essentially $k$-unbalanced $(6k-10)$-edge-connected signed graph admits a modulo $k$-orientation.
\end{theorem}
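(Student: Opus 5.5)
We would follow the strategy of Zhu, which reduces signed modulo orientations to ordinary ones, but replace his crude counting with the sharper tools behind Theorem~\ref{thm: LTWZ}. The plan is to prove a stronger, inductive statement about \emph{$\beta$-orientations}. Fix a boundary prescription $\beta\colon V(G)\to \Z_k$ with $\sum_v \beta(v)\equiv 2(2s-|E(G^-)|)$ achievable. We seek an orientation $D$ with $\partial_D(v)\equiv\beta(v)\pmod k$ for every $v$. As in \cite{LTWZ}, the induction hypothesis allows one exceptional vertex $z_0$ of low degree whose incident edges are pre-oriented, and it requires only that every nontrivial cut $\delta(A)$ avoiding $z_0$ satisfies $d(A)\ge 2k-2+|\tau(A)|$ (or the analogous Lov\'asz--Thomassen--Wu--Zhang condition). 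Here $\tau$ measures the boundary deficit.

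The reduction runs as follows. First, switch $G$ so that $|E(G^-)|=\epsilon(G)$ is minimal. Minimality gives that every edge cut contains at most half of its edges as negative edges, and that $G^+$ remains highly connected. We then treat the negative edges separately from the positive ones.

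\textbf{Case 1: $\epsilon(G)$ is even.} We pair up the negative edges and choose orientations of them, as sink or source edges, so that their total contribution to $\sum_v\partial(v)$ is $\equiv 0\pmod k$. This determines a boundary demand $\beta$ on $G^+$. It then suffices to find a $\beta$-orientation of an ordinary graph.

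\textbf{Case 2: $\epsilon(G)\ge k$.} We need at least $k$ negative edges to realize the parity correction. We use them as flexible gadgets, choosing the number $s$ of sink edges with $2(2s-\epsilon)\equiv0\pmod k$; this is possible because $k$ is odd and $\epsilon\ge k$.

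The key point is to keep the negative edges in play rather than deleting them. We treat each negative edge $uv$ as contributing a variable $\pm 2$ at both ends simultaneously. The plan is to lift the negative edges into a bounded number of disjoint structures, either paths in $G^+$ joining their ends or a matching-type structure. Each such structure can be re-oriented to transfer a $\pm 2$ boundary between two vertices. This lets us consume only about $k-3$ edges of connectivity rather than Zhu's $3k-4$. The budget is $(6k-10)-(3k-3)=3k-7$, so after reserving edge-disjoint structures that use at most $3k-7$ edges per cut, the remaining positive graph is still $(3k-3)$-edge-connected. We then apply the strengthened form of Theorem~\ref{thm: LTWZ} to that remaining graph, with the boundary adjusted by the reserved structures.

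The main obstacle is the selection step. We must pick which negative edges are sinks and which are sources, together with the reserved edge-disjoint connectors, so that every cut loses at most $3k-7$ edges while the total boundary is still $\equiv 0\pmod k$. The first approach to try is Nash-Williams/Tutte tree packing. A $(6k-10)$-edge-connected graph contains $3k-5$ edge-disjoint spanning trees, and we would use one or two of them to route the negative-edge parity corrections. The rest of the argument would be a minimal-counterexample analysis: contract $k$-orientable subgraphs such as $(k-1)$-fold parallel edges, and exploit cuts of size exactly $6k-10$. Verifying that the cut condition $d(A)\ge 2k-2+|\tau(A)|$ survives these reservations for small sides $A$ is where we expect the delicate counting, and where the improvement from $6k-7$ to $6k-10$ must be earned.
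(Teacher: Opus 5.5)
There is a genuine gap, and it sits at the step you call the budget. After switching so that $|E(G^-)|=\epsilon(G)$, the only guarantee on the positive part is $d_{G^+}(A)\ge d_{G^-}(A)$, hence $d_{G^+}(A)\ge\lceil(6k-10)/2\rceil=3k-5$. A cut of size $6k-10$ with $3k-5$ edges of each sign is fully compatible with minimal negativeness. So $G^+$ can have edge-connectivity $3k-5<3k-3$ before you reserve anything, and no choice of reserved connectors can leave a $(3k-3)$-edge-connected positive graph.

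Your count $(6k-10)-(3k-3)=3k-7$ implicitly treats negative edges as ordinary edges available to a $\beta$-orientation, but they are not. A negative edge shifts $\sum_v\partial_D(v)$ by $\pm2$. Compared with an ordinary orientation of the same edge, it carries an \emph{orientation-dependent} $\pm2$ correction at one end, which is not a fixed $\Z_k$-boundary. The clean way to absorb negative edges into an ordinary $\beta$-orientation problem is to route each through an auxiliary root, as in Lemma~\ref{lem:few}. That works only for very few negative edges, since the root's degree is about $2|E(G^-)|$.

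Tree packing does not repair this. Deleting a spanning tree can remove many edges from a given cut, and trees of $G$ contain negative edges anyway. So your Cases~1 and~2 reduce to a problem on $G^+$ to which Theorem~\ref{thm: LTWZ} (or its extension form) does not apply. The inductive $\beta$-statement you invoke is also never formulated precisely, and the crux (``delicate counting'') is left undone.

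What is missing is a mechanism for cuts with $d_{G^+}(A)\in\{3k-5,3k-4\}$, which is where the paper's work lies. It takes a minimal counterexample to a strengthened statement that allows one low even-degree vertex, so that Mader splitting (Lemma~\ref{lem:Mader}) can be used. It shows that counterexample is loopless and $(6k-9)$-regular, with $\epsilon(G)\ge k+1$ and negative multiplicity at most $k-2$. If $G^+$ is $(3k-3)$-edge-connected it finishes directly. Otherwise it takes a minimum-size $A$ with $|A|\ge2$ and $d_{G^+}(A)\le3k-4$, orients $G/A$ by minimality, and extends into $A$ via Lemma~\ref{thm: extension}, with the contracted vertex padded to degree $3k-2$.

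That extension can fail only at vertices with $d_{G^+}(v)=3k-4$ (equivalently $d_{G^-}(v)=3k-5$), where the prescribed boundary must be nonzero. The paper arranges this by re-choosing which negative edges inside $A$ are sinks. It keeps the number of sinks fixed modulo $k$ while avoiding one residue at each such vertex (Lemma~\ref{lem: star}, via Kneser's theorem and a degree-avoiding subgraph lemma). Nothing in your proposal plays the role of this step. You also need the case $k=3$ separately, which the paper takes from Theorem~\ref{thm: WYZZ}.
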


The rest of this paper is organized as follows. Section~\ref{sec: prelim} introduces the notation and presents results on splitting, contraction and orientations. Section~\ref{sec: selection} establishes a lemma on subgraphs with modular constraints. Section~\ref{sec: proof} uses this lemma to prove Theorem~\ref{thm: main}.


\section{Preliminaries}
\label{sec: prelim}

Let $G$ be a signed graph. For $A,B\subseteq V(G)$, let $e_G(A,B)$ be the number of edges with one end in $A$ and the other in $B$. In particular, $e_G(A,A)=|E(G[A])|$. For simplicity, we write $e_G(u,B)$, $e_G(A,v)$, and $e_G(u,v)$ for $e_G(\{u\},B)$, $e_G(A,\{v\})$, and $e_G(\{u\},\{v\})$, respectively. Let $\delta_G(A)$ be the edge cut between $A$ and $V(G)\setminus A$, and put $d_G(A)=|\delta_G(A)|$. Write $\delta_G(v)=\delta_G(\{v\})$, and let $d_G(v)$ be the degree of $v$, with each loop counted twice. Thus $d_G(v)=d_G(\{v\})$ when $v$ has no loop. Let $V_d(G)=\{v\in V(G):d_G(v)=d\}$.

The \emph{local edge connectivity} between distinct vertices $u$ and $v$ is defined by
$$
\lambda_G(u,v)=\min\{d_G(A):u\in A\subseteq V(G)\setminus\{v\}\}.
$$
We say that $G$ is {\em $k$-edge-connected } if $\lambda_G(u,v)\ge k$ for all distinct vertices $u, v\in V(G)$.

For distinct edges $uv_1, uv_2\in E(G)$, let $G_{[uv_1, uv_2]}$ be the signed graph obtained from $G$ by deleting $uv_1$ and $uv_2$ and adding a new edge $v_1v_2$ with sign $\sigma_G(uv_1)\sigma_G(uv_2)$. This operation is called {\em splitting} $uv_1$ and $uv_2$ at $u$ (see Fig.~\ref{fig: splitting}). 
\begin{figure}[!t]
\centering
\scriptsize
\begin{tikzpicture}[scale=0.9]

\path (-3.3,0) coordinate (u);
\path (-4.9,1.2) coordinate (a);
\path (-4.9,-1.2) coordinate (b);
\path (-1.7,1.2) coordinate (v1);
\path (-1.7,-1.2) coordinate (v2);

\draw[line width=0.55pt] (a) -- (u) -- (b);
\draw[line width=1.4pt] (u) -- (v1);
\draw[line width=1.4pt] (u) -- (v2);
\node at (-4.9,0.12) {$\vdots$};

\foreach \v in {u,a,b,v1,v2}
 \draw[fill=black] (\v) circle (0.08cm);
\node[below=3pt] at (u) {$u$};
\node[right] at (v1) {$v_1$};
\node[right] at (v2) {$v_2$};
\node at (-3.3,-1.9) {$G$};

\draw[->,>=stealth,line width=0.7pt] (-0.35,0) -- (0.95,0);

\path (3.8,0) coordinate (up);
\path (2.2,1.2) coordinate (ap);
\path (2.2,-1.2) coordinate (bp);
\path (5.4,1.2) coordinate (v1p);
\path (5.4,-1.2) coordinate (v2p);

\draw[line width=0.55pt] (ap) -- (up) -- (bp);
\draw[line width=1.4pt] (v1p) to[bend right=40]
  node[midway,right=3pt] {$\sigma_G(uv_1)\sigma_G(uv_2)$} (v2p);
\node at (2.2,0.12) {$\vdots$};

\foreach \v in {up,ap,bp,v1p,v2p}
  \draw[fill=black] (\v) circle (0.08cm);
\node[below=3pt] at (up) {$u$};
\node[right] at (v1p) {$v_1$};
\node[right] at (v2p) {$v_2$};
\node at (3.8,-1.9) {$G_{[uv_1,uv_2]}$};
\end{tikzpicture}
\caption{Splitting $uv_1$ and $uv_2$ at $u$.}
\label{fig: splitting}
\end{figure}
Observe that every modulo $k$-orientation of $G_{[uv_1, uv_2]}$ can be lifted to a modulo $k$-orientation of $G$.

\begin{lemma}[\cite{WYZZ}]
\label{lem:splitting-negativeness}

Let $G$ be a signed graph and let $uv_1$ and $uv_2$ be two distinct edges of $G$. Then $\epsilon(G_{[uv_1,uv_2]})\ge\epsilon(G)-2$.

\end{lemma}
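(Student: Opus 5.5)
We must show $\epsilon(G_{[uv_1,uv_2]})\ge \epsilon(G)-2$. The natural route is to reverse the splitting: start from a signed graph equivalent to $G_{[uv_1,uv_2]}$ that realizes its negativeness, and lift it back to a signed graph equivalent to $G$ whose number of negative edges is at most two more.

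Write $H=G_{[uv_1,uv_2]}$ and let $e=v_1v_2$ be the new edge. Choose a set $S\subseteq V(H)=V(G)$ such that switching $H$ at all vertices of $S$ yields $H_1$ with $|E(H_1^-)|=\epsilon(H)$. Switching at a set $S$ multiplies the sign of each non-loop edge by $-1$ exactly when the edge has exactly one end in $S$. The key observation is that switching commutes with splitting. Switching $G$ at the same set $S$ gives $G_1$, equivalent to $G$, and $(G_1)_{[uv_1,uv_2]}=H_1$. To see this, note that every edge other than $uv_1,uv_2,e$ is common to $G$ and $H$ and gets the same new sign. The new edge receives sign $\sigma_{G_1}(uv_1)\sigma_{G_1}(uv_2)$. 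If $u\in S$, this product picks up two factors of $-1$, one from each edge having exactly one end in $S$. If $u\notin S$, it picks up the factors corresponding to $v_1\in S$ and $v_2\in S$. In either case the change is $(-1)^{[v_1\in S]+[v_2\in S]}$, which is exactly how switching at $S$ changes the sign of $v_1v_2$ in $H$.

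Several degenerate cases need brief attention. If $v_1=v_2$, then $e$ is a loop, whose sign switching never changes. This agrees with the product, since the two factors $(-1)^{[v_1\in S]}$ are equal. If $v_1$ or $v_2$ equals $u$, then $uv_i$ is a loop at $u$. Its sign is unchanged by switching, and the parity check goes through the same way. I would verify these cases explicitly, since this bookkeeping is the only place where an error could occur.

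It remains to count. The graphs $G_1$ and $H_1$ share all edges other than $uv_1,uv_2$ (in $G_1$) and $e$ (in $H_1$), with identical signs on the shared edges. Hence
$$
|E(G_1^-)|\le |E(H_1^-)|-[\sigma_{H_1}(e)=-1]+2\le \epsilon(H)+2.
$$
Since $G_1$ is equivalent to $G$, we get $\epsilon(G)\le\epsilon(H)+2$, which is the claim. There is no real obstacle here; the only care needed is the sign-compatibility check and the loop cases above.
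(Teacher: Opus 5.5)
Your argument is correct and complete. You choose $S$ so that switching $G_{[uv_1,uv_2]}$ at $S$ attains its negativeness, note that switching $G$ at the same $S$ commutes with the splitting (your sign check for the new edge, including the loop cases, is right), and compare the two edge sets to get $\epsilon(G)\le\epsilon(G_{[uv_1,uv_2]})+2$. The paper gives no proof of its own and simply quotes this lemma from Wu, Ye, Zang and Zhang; your lifting argument is the standard proof of this fact.

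One small wording slip: you say ``if $u\in S$, this product picks up two factors of $-1$''. In fact, for $u\in S$ the edge $uv_i$ flips exactly when $v_i\notin S$. The total change $(-1)^{[v_1\in S]+[v_2\in S]}$ that you state is nevertheless correct.
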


\begin{lemma}[\cite{Mader}]
\label{lem:Mader}
Let $G$ be a connected graph and let $u\in V(G)$ such that $|\delta_G(u)|\ge4$ and no edge in $ \delta_G(u)$ is a cut-edge. Then $G$ contains two edges $uv_1$ and $uv_2$ such that $\lambda_{G_{[uv_1, uv_2]}}(x,y)=\lambda_G(x,y)$ for any distinct $x,y\in V(G)\setminus\{u\}$.
\end{lemma}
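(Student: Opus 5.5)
The plan is the standard ``dangerous set'' argument in the style of Frank's proof of Mader's splitting theorem. For $X\subseteq V(G)\setminus\{u\}$ put
$$
R(X)=\max\{\lambda_G(x,y): x\in X,\ y\in V(G)\setminus(X\cup\{u\})\},
$$
with $R(X)=0$ if no such $y$ exists. By Menger's theorem, $\lambda_{G'}(x,y)\ge\lambda_G(x,y)$ for all $x,y\neq u$ (where $G'$ denotes the graph after splitting) if and only if $d_{G'}(X)\ge R(X)$ for every such $X$. Splitting $uv_1,uv_2$ changes $d(X)$ only when $v_1,v_2\in X$, and then it drops by exactly $2$. Since splitting can never increase local edge connectivity, equality is then automatic. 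Hence a pair is admissible unless some \emph{dangerous} set $X$, meaning $d_G(X)\le R(X)+1$, contains both $v_1$ and $v_2$. So it suffices to show: if every neighbour pair of $u$ (counted with multiplicity) lies in a dangerous set, then $d_G(u)\le 3$ or some edge at $u$ is a cut-edge.

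The tools are submodularity and posimodularity of $d_G$, together with skew-supermodularity of $R$. For crossing $X,Y$,
$$
R(X)+R(Y)\le \max\{R(X\cap Y)+R(X\cup Y),\ R(X\setminus Y)+R(Y\setminus X)\},
$$
which I would verify by a short case check on where the witnessing pairs lie. From these I derive the key crossing lemma. If $X,Y$ are maximal dangerous sets with $X\cap Y\ne\emptyset$, then $X\cup Y$ is not dangerous, so the first alternative in the inequality fails. Therefore $X\setminus Y$ and $Y\setminus X$ are tight or dangerous, and comparing $d(X)+d(Y)\ge d(X\setminus Y)+d(Y\setminus X)+2e(X\cap Y,\,V\setminus(X\cup Y))$ yields $e_G(u,X\cap Y)\le 1$ and $d(X)=R(X)+1$, with all slack exhausted. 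Separately, I handle a dangerous set with $u$'s edges into $V\setminus(X\cup\{u\})$ absent; there one exploits that the requirement is witnessed by some $y$ outside $X$.

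Now fix an edge $uv$ and a maximal dangerous set $X\ni v$. Since splitting $uv$ with each other edge is blocked, the neighbours of $u$ are covered by maximal dangerous sets each meeting $N(u)$ together with $X$. Choose a minimal family of maximal dangerous sets $X_1,\dots,X_t$ covering all edges at $u$ in the pairwise sense. If $t=2$, then $X_1\cup X_2$ would be dangerous by the crossing computation, contradicting maximality. So $t\ge3$, and one sums the crossing inequalities over the three pairs. The goal is an inequality of the form
$$
\sum_i d(X_i)\ \ge\ \sum_i R(X_i)+d_G(u)-\text{(small)},
$$
which together with $d(X_i)\le R(X_i)+1$ forces $d_G(u)\le 3$. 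In the borderline case it forces one $X_i$ with $d_G(X_i\cup\{u\})$ equal to $1$, meaning an edge at $u$ is a cut-edge. This contradicts the hypotheses $d_G(u)\ge4$ and no cut-edge in $\delta_G(u)$.

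The main obstacle is this final counting step with three mutually crossing maximal dangerous sets, where the exceptional configuration (the $K_4$-type situation with $d_G(u)=3$) must be shown to be the only one. I would attempt it by assuming $d_G(u)\ge4$, picking three edges $uv_1,uv_2,uv_3$ whose pairwise blocking sets are distinct, and applying posimodularity to $X_1\setminus(X_2\cup X_3)$ and similar sets. A fourth edge $uv_4$ then supplies the extra $1$ needed for the contradiction. Connectedness, and the no-cut-edge assumption, rule out the degenerate case in which some $R(X_i)$ is witnessed only through $u$.
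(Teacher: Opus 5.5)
The paper does not prove this lemma; it is quoted from Mader. So your outline has to stand on its own, and it has a genuine gap at its central step. The reduction to dangerous sets is fine, but your crossing lemma is neither correctly derived nor true. Non-dangerousness of $X\cup Y$ does not exclude the first alternative of skew-supermodularity. Combined with submodularity it gives only
\[
R(X)+R(Y)+2\ge d_G(X)+d_G(Y)\ge d_G(X\cap Y)+d_G(X\cup Y)\ge R(X\cap Y)+R(X\cup Y)+2,
\]
which is consistent with equality throughout.

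Here is a concrete instance. Take vertices $u,p,q,x,y$ with $M\ge1$ parallel edges $pq$, two parallel edges for each of $px,py,xq,yq,up,uq$, and single edges $ux,uy$. Then $d_G(u)=6$, no edge at $u$ is a cut-edge, $\lambda_G(p,q)=M+6$, and every local connectivity involving $x$ or $y$ is at most $5$. The sets $X=\{p,x\}$ and $Y=\{p,y\}$ are maximal dangerous sets with $d_G(X)=d_G(Y)=M+7$. Moreover $\{p\}$ is tight, $d_G(X\cup Y)=R(X\cup Y)+2$, only the first alternative holds, and $e_G(u,X\cap Y)=2$. So your conclusion $e_G(u,X\cap Y)\le1$ fails even for maximal dangerous sets sharing a neighbour of $u$.

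Write $W=V(G)\setminus\{u\}$. The identity $d_G(Z)-d_G(W\setminus Z)=e_G(u,Z)-e_G(u,W\setminus Z)$ shows that the first alternative only forces $e_G(u,X\cup Y)-e_G(u,W\setminus(X\cup Y))\le2$. This route therefore excludes it only when $X\cup Y$ carries enough of the edges at $u$, or when you invoke the assumption that no pair is admissible. Your argument does neither.

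The endgame is also not a proof. If $t=2$, then $X_1\cup X_2$ contains all neighbours of $u$, so $d_G(X_1\cup X_2)\ge R(X_1\cup X_2)+d_G(u)$. Hence $X_1\cup X_2$ is never dangerous, and no contradiction with maximality can arise there. In that case the first alternative is indeed excluded, and the second gives $e_G(u,X_1\cap X_2)=1$ with $X_1\setminus X_2$ and $X_2\setminus X_1$ tight. But the resulting degree count only yields $e_G(u,X_1\setminus X_2)=e_G(u,X_2\setminus X_1)$, i.e.\ $d_G(u)$ odd. Ruling this out needs further blocking sets, e.g.\ for pairs $ua,ub$ with $a\in X_1\setminus X_2$ and $b\in X_2\setminus X_1$. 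For $t\ge3$ you only state the inequality you hope to reach.

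This missing counting is exactly where $d_G(u)\ge4$ and the no-cut-edge hypothesis must be used, so it is the substance of Mader's theorem. As written, the proposal is a correct reduction plus a sketch. To close it you would have to reproduce a full argument (Mader's, or Frank's shorter proof), or simply cite the result, as the paper does.
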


For a nonempty subset $A\subseteq V(G)$, let $G/A$ denote the signed graph obtained from $G$ by identifying all vertices of $A$ into a single vertex and deleting all resulting positive loops. This operation is called \emph{contracting} $A$ (see Fig.~\ref{fig: contraction}).
\begin{figure}[!t]
\centering
\scriptsize
\begin{tikzpicture}[scale=0.9]
\draw[black!45,line width=0.5pt] (-5.3,0) ellipse (1.45cm and 1.2cm);
\draw[black!45,line width=0.5pt] (-1.65,0) ellipse (1.05cm and 1.2cm);

\path (-6.05,0.55) coordinate (a1);
\path (-5.35,0.55) coordinate (a2);
\path (-6.05,-0.55) coordinate (a3);
\path (-5.35,-0.55) coordinate (a4);
\path (-4.35,0.55) coordinate (c1);
\path (-4.35,-0.55) coordinate (cs);
\path (-2.2,0.55) coordinate (b1);
\path (-2.2,-0.55) coordinate (bs);

\draw[dotted,line width=0.85pt] (a1) -- node[above] {$e_1$} (a2);
\draw[dotted,line width=0.85pt] (a3) -- node[below] {$e_r$} (a4);
\node at (-5.7,0.12) {$\vdots$};
\draw[line width=0.85pt] (c1) -- node[above] {} (b1);
\draw[line width=0.85pt] (cs) -- node[below] {} (bs);
\node at (-3.275,0.12) {$\vdots$};

\foreach \v in {a1,a2,a3,a4,c1,cs,b1,bs}
  \draw[fill=black] (\v) circle (0.08cm);
\node at (-5.3,-1.5) {$A$};
\node at (-1.65,-1.5) {$V(G)\setminus A$};
\node at (-3.7,-2.1) {$G$};

\draw[->,>=stealth,line width=0.7pt] (0.1,0) -- (1.4,0);

\draw[black!45,line width=0.5pt] (5.6,0) ellipse (1.05cm and 1.2cm);
\path (3.0,0) coordinate (z);
\path (5.05,0.55) coordinate (b1p);
\path (5.05,-0.55) coordinate (bsp);

\draw[line width=0.85pt] (z) -- node[midway,above] {} (b1p);
\draw[line width=0.85pt] (z) -- node[midway,below] {} (bsp);
\node at (5.05,0.12) {$\vdots$};
\draw[dotted,line width=0.85pt]
  (z) .. controls (1.9,1.4) and (4.1,1.4) ..
  node[midway,above] {$e_1$} (z);
\draw[dotted,line width=0.85pt]
  (z) .. controls (1.9,-1.4) and (4.1,-1.4) ..
  node[midway,below] {$e_r$} (z);
\node at (2.4,0.12) {$\vdots$};

\foreach \v in {z,b1p,bsp}
  \draw[fill=black] (\v) circle (0.08cm);
\node at (5.6,-1.5) {$V(G)\setminus A$};
\node at (4.3,-2.1) {$G/A$};
\end{tikzpicture}
\caption{Contracting $A$. Dotted edges are negative.}
\label{fig: contraction}
\end{figure}

\begin{lemma}[\cite{WYZZ}]\label{le: contract}
Let $G$ be a signed graph and let $A\subseteq V(G)$. Then $\epsilon(G/A)\ge\epsilon(G)$. If $|E(G^-)|=\epsilon(G)$, then $\epsilon(G/A)=\epsilon(G)$.
\end{lemma}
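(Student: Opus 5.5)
We must prove Lemma \ref{le: contract}: $\epsilon(G/A)\ge\epsilon(G)$, with equality when $|E(G^-)|=\epsilon(G)$.

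\textbf{Lifting switchings.} Switching at the contracted vertex $a$ of $G/A$ corresponds to switching at every vertex of $A$ in $G$. Switching at any other vertex $v\notin A$ is the same operation in $G$ and in $G/A$. Hence every signed graph $H'$ equivalent to $G/A$ is obtained as $H/A$ for some $H$ equivalent to $G$.

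Two things need checking here. First, switching at all of $A$ leaves the sign of every edge inside $G[A]$ unchanged, since both ends are switched. Second, on an edge of $\delta_G(A)$ it acts exactly like switching at $a$. So the loops of $G/A$ keep their signs, as they must, since loops are unaffected by switching.

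\textbf{Counting negative edges.} The negative edges of $H/A$ are the negative edges of $H$ minus the positive edges of $H[A]$. Indeed, contraction deletes only positive loops, and negative edges inside $A$ survive as negative loops. Therefore
$$|E((H/A)^-)| = |E(H^-)| \ge \epsilon(G).$$
Minimizing over $H'$ gives $\epsilon(G/A)\ge \epsilon(G)$.

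\textbf{Equality case.} Suppose $|E(G^-)|=\epsilon(G)$. Taking $H=G$ in the count above gives $|E((G/A)^-)|=|E(G^-)|=\epsilon(G)$. Hence $\epsilon(G/A)\le\epsilon(G)$, and combined with the inequality already shown, $\epsilon(G/A)=\epsilon(G)$.

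The only delicate point is the first step: checking that every switching class representative of $G/A$ arises by contracting a representative of $G$. The two checks listed there settle it.
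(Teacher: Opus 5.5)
Your proof is correct. The paper gives no proof of this lemma; it only quotes it from \cite{WYZZ}. Your argument is the natural one: a switching at the contracted vertex lifts to simultaneous switching at every vertex of $A$, which fixes the signs inside $A$ (and hence which loops get deleted), and contraction never deletes a negative edge, so $|E((H/A)^-)|=|E(H^-)|$. One wording slip: the sentence ``the negative edges of $H/A$ are the negative edges of $H$ minus the positive edges of $H[A]$'' should say that contraction removes only positive edges, so the negative edges of $H/A$ are exactly those of $H$. That is what your displayed equality actually uses.
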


Let $k\ge3$ be an odd integer, and let $G$ be a graph. A mapping $\beta: V(G)\to\Z_k$ is called a \emph{$\Z_k$-boundary} of $G$ if $\sum_{v\in V(G)}\beta(v)\equiv 0 \pmod k$.  An orientation $D$ of $G$ is a \emph{$\beta$-orientation} if
$$
 \partial_D(v)\equiv\beta(v)\pmod{k}
\quad \forall v\in V(G).
$$
For $A\subseteq V(G)$, let $\beta(A)= \sum_{v\in A}\beta(v)$ in $\Z_k$, and choose $\tau(A)\in[-k,k]$ such that
$$
\tau(A)\equiv\beta(A)\pmod{k},\qquad
\tau(A)\equiv d_G(A)\pmod{2}.
$$
These congruences determine $\tau(A)$ modulo $2k$, so $|\tau(A)|$ is uniquely determined. Write $\tau(v)=\tau(\{v\})$. Since $|\tau(A)|\le k$ and $d_G(A)-|\tau(A)|$ is even,
we will use the following inequality:
\begin{equation}\label{eq: parity bound}
d_G(A)\ge2k-2+|\tau(A)| \mbox{ \ if }d_G(A)\ge3k-3.
\end{equation}

\begin{lemma}[\cite{LTWZ}]\label{thm: extension}
Let $k\ge 3$ be an odd integer, let $G$ be a loopless ordinary graph with a $\Z_k$-boundary $\beta$ and $z\in V(G)$. Let $D_z$ be a pre-orientation of $\delta_G(z)$. Assume that 
\begin{itemize}
\item[\rm (i)] $|V(G)|\ge 3$;
\item[\rm (ii)] $d_G(z)\leq 2k-2+|\tau(z)|$ and $\partial_{D_z}(z)\equiv \beta(z) \pmod k$;
\item[\rm (iii)] $d_G(A)\ge 2k-2+|\tau(A)|$ for each nonempty $A\subseteq V(G)\setminus\{z\}$ with $|V(G)\setminus A|\ge2$. 
\end{itemize}
Then $D_z$ can be extended to a $\beta$-orientation $D$ of $G$.
\end{lemma}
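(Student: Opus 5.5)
The plan is to take a counterexample $(G,\beta,z,D_z)$ minimizing $|V(G)|+|E(G)|$ and derive a contradiction from a small number of reductions. Call a set $A\subseteq V(G)\setminus\{z\}$ with $|A|\ge 2$ and $|V(G)\setminus A|\ge2$ \emph{tight} if $d_G(A)=2k-2+|\tau(A)|$. Since $d_G(A)-|\tau(A)|$ is even, any non-tight set of this kind satisfies $d_G(A)\ge 2k+|\tau(A)|$. This slack of $2$ is what makes the local reductions below possible.

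\textbf{Step 1 (base case).} First dispose of $|V(G)|=3$ directly. Let $V(G)=\{z,x,y\}$. Once $D_z$ is fixed, only the edges between $x$ and $y$ are free. The required boundary at $x$ then prescribes the net outflow $\partial(x)$ over the $xy$-edges modulo $k$. Condition (iii) applied to $\{x\}$ and to $\{y\}$ gives enough $xy$-edges, with the correct parity, to realize this value. The equation at $y$ then follows automatically, because $\beta$ is a $\Z_k$-boundary and $\partial_{D_z}(z)\equiv\beta(z)\pmod k$.

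\textbf{Step 2 (contracting a tight set).} Suppose a tight set $A$ exists.
\begin{itemize}
\item Apply minimality to $G/A$ with the induced boundary (the contracted vertex receives $\beta(A)$) and the same $z$ and $D_z$. Every cut of $G/A$ is a cut of $G$, and $|V(G/A)|\ge3$ because $|V(G)\setminus A|\ge2$. Hence (i)--(iii) hold, and we obtain a $\beta$-orientation of $G/A$ extending $D_z$.
\item This orientation prescribes directions on $\delta_G(A)$ with boundary $\equiv\beta(A)\pmod k$ at the contracted vertex.
\item Now consider $G/(V(G)\setminus A)$ with new root $z'$, the contracted vertex, carrying boundary $-\beta(A)$ and this pre-orientation. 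Condition (ii) holds because $A$ is tight, so $d(z')=d_G(A)=2k-2+|\tau(A)|$; condition (i) holds because $|A|\ge2$.
\item Minimality again extends the pre-orientation to an orientation on this graph. Gluing the two orientations yields a $\beta$-orientation of $G$, a contradiction.
\end{itemize}
So we may assume there is no tight set.

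\textbf{Step 3 (peeling off an edge at $z$).} With all nontrivial sets having slack, remove an edge $e=zy$. Orient it as $D_z$ dictates, delete it, and replace $\beta(y)$ by $\beta(y)\mp1$ according to its direction. Then set $\beta'(z)=\beta(z)$ minus the contribution of $e$, and restrict $D_z$ to the remaining edges at $z$.
\begin{itemize}
\item For a set $A$ with $y\in A$, the cut $d(A)$ drops by at most one and $|\tau(A)|$ changes by exactly one. The slack of $2$ therefore keeps (iii) valid for $A$ with $|A|\ge2$.
\item Singletons $\{v\}$ need a separate check: they may be tight. The plan is to choose $e$ so that it does not hit a tight singleton unfavourably. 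If every neighbour of $z$ is a tight singleton, instead split off a suitable pair of edges at such a singleton, using Lemma~\ref{lem:Mader}-type splitting, which preserves cut sizes.
\item For $z$ itself, $d(z)$ decreases by $1$ and $|\tau(z)|$ changes by $1$. If $|\tau(z)|$ drops, (ii) could fail exactly when $d_G(z)=2k-2+|\tau(z)|$. In that case, pick $e$ so that its removal raises $|\tau(z)|$ instead; this is possible unless all edges at $z$ are oriented the same way relative to $\tau(z)$, and that degenerate situation must be handled separately.
\end{itemize}
The smaller instance then has an extension by minimality, and putting $e$ back gives the contradiction.

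I expect Step 3 to be the main obstacle. The delicate part is the joint bookkeeping of $|\tau|$ at $z$, at tight singletons, and at the new endpoint, so that conditions (ii) and (iii) survive the deletion simultaneously. The first thing I would try is to choose $e$ among the edges at $z$ according to the sign of $\tau(z)$ relative to the direction given by $D_z$. If no such edge exists, I would contract or split at a neighbour $y$ of $z$ with $d_G(y)=2k-2+|\tau(y)|$, treating $y$ like a tight set as in Step 2.
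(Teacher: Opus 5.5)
The paper does not prove this lemma; it is quoted from \cite{LTWZ}, so your outline has to stand on its own. Steps 1 and 2 are fine and are the standard opening. In Step 1 you need (ii) as well as (iii): since $\tau(z)\equiv-\tau(x)-\tau(y)\pmod{2k}$, one gets $2e_G(x,y)=d(x)+d(y)-d(z)\ge 2k-2$, and $k-1$ parallel edges already realize every residue. The gap is Step 3, which carries essentially all the content of the lemma.

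Your diagnosis there is partly off. Condition (ii) is never at risk: deleting an edge at $z$ lowers $d(z)$ by one and $|\tau(z)|$ by at most one, so the ``degenerate situation'' you defer is not an issue. The real obstruction is a tight endpoint $y$. Deleting $zy$ shifts $\tau(y)$ by $\pm1$ in a direction dictated by $D_z$, and if $|\tau(y)|$ goes up, (iii) fails at $\{y\}$ by $2$.

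Neither of your remedies works:
\begin{itemize}
\item Splitting a pair of edges at $y$ keeps $\beta(y)$ and the parity of $d(y)$, hence keeps $\tau(y)$, while lowering $d(y)$ by $2$. So (iii) fails at $\{y\}$. Lemma~\ref{lem:Mader} controls local edge-connectivity, not these $\tau$-dependent bounds.
\item Treating $\{y\}$ ``like a tight set'' is vacuous: $G/\{y\}=G$, and $G/(V(G)\setminus\{y\})$ has only two vertices, violating (i).
\end{itemize}

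Here is an instance where your outline has no legal move. Take $k=3$, $G=K_5$ on $\{z,a_1,a_2,b_1,b_2\}$, $\beta\equiv0$, and let $D_z$ direct $z\to a_1$, $z\to a_2$, $b_1\to z$, $b_2\to z$.
\begin{itemize}
\item All hypotheses hold.
\item Every $A\subseteq V(G)\setminus\{z\}$ with $2\le|A|\le3$ has $d(A)=6=2k+|\tau(A)|$, so Step 2 never applies.
\item Every $a_i,b_i$ is tight with $\tau=0$.
\item Deleting any edge at $z$ leaves an endpoint $y$ with $d(y)=3$ and $|\tau(y)|=1$, whereas (iii) requires $d(y)\ge 2k-2+|\tau(y)|=5$.
\end{itemize}
Likewise, when $d(z)=0$, Step 3 has no edge to act on at all. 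This case is allowed, and it is exactly how the lemma is used in the proof of Claim~\ref{cl: small cut}.

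What is missing is a reduction that still applies when every vertex other than $z$ is tight and no edge at $z$ can be deleted. It must either act away from $z$ or enlarge the root, and you must prove that (ii) and (iii) survive it in general. For example, in the $K_5$ instance the following gives a valid smaller instance: orient $a_1\to b_1$, split $a_1a_2,a_1b_2$ off into an edge $a_2b_2$, and merge $a_1$ into $z$. Nothing in your outline supplies such a step or its general verification.
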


\begin{lemma}\label{lem:few}
Let $k\ge3$ be an odd integer and let $G$ be a $(3k-3)$-edge-connected signed graph with at least two vertices.
If $|E(G^-)|$ is even and at most $k-1$, or if $|E(G^-)|=k$, then $G$ admits a modulo $k$-orientation.
\end{lemma}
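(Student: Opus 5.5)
Here is how I would prove Lemma~\ref{lem:few}: turn the signed problem into a $\beta$-orientation problem on an ordinary graph, and then solve that problem with Theorem~\ref{thm: LTWZ}-type machinery in the form of Lemma~\ref{thm: extension}.

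\emph{Step 1: reduction to an ordinary graph.} Let $m=|E(G^-)|$. Delete the negative edges to obtain the ordinary graph $G^+$. In any orientation of $G$, each negative edge $e=uv$ is either a sink edge or a source edge. A sink edge contributes $-1$ to $\partial(u)$ and $-1$ to $\partial(v)$; a source edge contributes $+1$ to each (loops contribute $\pm2$ to their single end). So I fix a choice, sink or source, for every negative edge. This determines a function $\gamma\colon V(G)\to\Z$ giving the total contribution of the negative edges at each vertex, with $\sum_v\gamma(v)=2(2t-m)$, where $t$ is the number of source edges. I then need a $\beta$-orientation of $G^+$ with $\beta=-\gamma$. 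Such a $\beta$ is a $\Z_k$-boundary exactly when $2(2t-m)\equiv0\pmod k$, i.e. $2t\equiv m\pmod k$ since $k$ is odd. If $m$ is even with $m\le k-1$, take $t=m/2$. If $m=k$, take $t=0$ (all sink edges) or $t=k$, since $2k\equiv k\equiv0$. So in both cases a suitable choice of $t$ exists.

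\emph{Step 2: the connectivity problem.} The main obstacle is that $G^+$ is only $(3k-3-m)$-edge-connected, so Theorem~\ref{thm: LTWZ} does not apply to it directly. My plan is to avoid deleting the negative edges and instead encode each one by a gadget that carries a prescribed boundary. Subdivide each negative edge $e=uv$ by a new vertex $w_e$, so that $uw_e$ and $w_ev$ become positive edges. In an orientation of this subdivided graph, $\partial(w_e)=+2$ if $e$ is a sink edge in $G$, $-2$ if it is a source edge, and $0$ if the edge is oriented consistently through $w_e$. Prescribing $\beta(w_e)\equiv\pm2$ rules out $0$, because $k\ge3$; so an orientation with $\partial(w_e)=\pm2$ is exactly an orientation of $e$ in $G$ as a sink or source edge. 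The degree-$2$ vertices $w_e$ break the connectivity hypotheses, so I would not leave them as they are. Instead, I lift them back by splitting: at each $w_e$ the two incident edges can be split off (reversing the subdivision). Equivalently, I would apply Lemma~\ref{thm: extension} to a contracted or auxiliary graph in which small cuts can only separate a single $w_e$ vertex.

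\emph{Step 3: applying the extension lemma.} Concretely, take the ordinary graph $H$ obtained from $G$ by treating all edges as positive. It is $(3k-3)$-edge-connected, so by~\eqref{eq: parity bound} every cut $A$ satisfies $d_H(A)\ge 2k-2+|\tau(A)|$ for any boundary. Choose $\beta$ on $V(H)$ to compensate for the sign change of the negative edges. Reversing one half of a negative edge changes the boundaries at its ends by $\pm2$, which I absorb into $\beta$ as $\beta(u)=\sum$ of $\pm2$ over the negative edges at $u$ oriented into $u$ in a fixed reference orientation. Then $\beta$-orientations of $H$ correspond to modulo $k$-orientations of $G$, except for a global parity constraint: each negative edge must be converted consistently. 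I would verify that the count condition on $t$ from Step~1 is exactly what makes $\beta$ a valid $\Z_k$-boundary, and then invoke Theorem~\ref{thm: LTWZ}'s strengthened form (Lemma~\ref{thm: extension}) with an arbitrary vertex $z$ of small degree, or handle $|V(G)|=2$ directly by counting. I expect this correspondence, namely making the reversal of a negative edge a local boundary shift, to be the delicate point that must be checked carefully.
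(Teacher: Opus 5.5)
There is a genuine gap, and it is the point you flagged yourself. In Step~3 you treat the negative edges as positive and try to absorb the discrepancy into a boundary $\beta$ fixed in advance. For a formerly negative edge $uv$, an orientation of $H$ contributes $(+1,-1)$ or $(-1,+1)$ to $(\partial(u),\partial(v))$. In $G$ the edge must contribute $(+1,+1)$ or $(-1,-1)$. The correction at the end you flip is therefore $+2$ or $-2$, depending on which direction the $\beta$-orientation happens to choose. So it cannot be built into $\beta$ beforehand. Fixing a ``reference orientation'' is circular, because the directions of these edges are chosen by the extension lemma, not by you. No fixed shift $(c_u,c_v)$ works for both directions: it would force $c_v-c_u\equiv2$ and $c_u-c_v\equiv2$, i.e.\ $4\equiv0\pmod k$. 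The obstruction is thus not a global parity condition, and Step~3 as written does not produce an orientation of $G$.

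Your Step~2 gadget is the right instinct, but you leave it unfinished. The degree-$2$ vertices $w_e$ violate condition (iii) of Lemma~\ref{thm: extension}, and that lemma tolerates only one exceptional (pre-oriented) vertex. The missing idea is to identify all the $w_e$ into a \emph{single} new vertex $z$:
\begin{itemize}
\item Add $z$ to $G^+$ and replace each negative edge $uv$ by the two edges $uz,vz$; a negative loop just gives two parallel edges.
\item Take $\beta\equiv0$, and encode sink and source edges by a pre-orientation $D_z$ that directs each pair both toward $z$ or both away from $z$.
\end{itemize}
Every nonempty proper $A\subset V(G)$ then still has $d_H(A)\ge d_G(A)\ge3k-3$, so (iii) follows from \eqref{eq: parity bound}. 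Condition (ii) becomes $2|E(G^-)|\le2k-2$ with $\partial_{D_z}(z)=0$ (half the pairs in, half out). This is exactly where the hypothesis ``even and at most $k-1$'' is used.

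For $|E(G^-)|=k$ this fails: $d(z)=2k$ is even and $\tau(z)=0$, so (ii) would require $2k\le 2k-2$. The fix is as follows:
\begin{itemize}
\item Direct all pairs toward $z$, and replace one edge $xz$ by $k-1$ parallel edges directed from $z$ to $x$.
\item This gives $d(z)=3k-2$, $\partial_{D_z}(z)=-k$ and $|\tau(z)|=k$, so (ii) holds with equality.
\item The contribution at $x$ changes from $+1$ to $1-k\equiv1\pmod k$, so undoing the replacement afterwards preserves all boundaries modulo $k$.
\end{itemize}
Your Step~1 count of source edges is correct, but it does not detect that the case $|E(G^-)|=k$ needs such an extra device.
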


\begin{proof}
Without loss of generality, assume that $G$ contains no positive loop. Let $H$ be the ordinary graph obtained from $G^+$ by adding a new vertex $z$ and, for each $uv\in E(G^-)$, adding two edges $uz$ and $vz$. Then $H$ is loopless and $|V(H)|\ge 3$.

We define a new graph $H'$ and a pre-orientation $D_z$ at $z$ as follows.
If $|E(G^-)|$ is even, let $H'=H$ and choose half the edges in $E(G^-)$.
For each $uv\in E(G^-)$, direct both $uz$ and $vz$ toward $z$
if $uv$ is chosen, and away from $z$ otherwise.
Then $d_{H'}(z)=2|E(G^-)|\le2k-2$ and $\partial_{D_z}(z)=0$.
If $|E(G^-)|=k$, initially direct all edges in $\delta_H(z)$ toward $z$,
and then replace one edge $xz$ with $k-1$ parallel edges directed
from $z$ to $x$. Denote the resulting graph by $H'$.
Then $d_{H'}(z)=3k-2$ and $\partial_{D_z}(z)=-k$.

Take $\beta\equiv0$.
Then $d_{H'}(z)\le2k-2+|\tau(z)|$ and
$\partial_{D_z}(z)\equiv\beta(z)\pmod{k}$.
Moreover, for every nonempty $A\subseteq V(H')\setminus\{z\}$
with $|V(H')\setminus A|\ge2$, we have $d_{H'}(A)\ge d_G(A)\ge3k-3$.
By applying \eqref{eq: parity bound} in $H'$, we obtain
$$
d_{H'}(A)\ge2k-2+|\tau(A)|.
$$
Therefore, by Lemma~\ref{thm: extension}, $D_z$ can be extended to a modulo $k$-orientation of $H'$.

If $|E(G^-)|=k$, we first replace the $k-1$ parallel edges
directed from $z$ to $x$ with a single edge $xz$ directed
toward $z$.
In either case, we then replace each pair $uz,vz$ with
the corresponding negative edge $uv$, keeping the half-edge
directions at $u$ and $v$ unchanged.
After deleting $z$, we obtain a modulo $k$-orientation of $G$.
\end{proof}
\section{Subgraphs with modular constraints}
\label{sec: selection}

In this section, we establish Lemma~\ref{lem: star}, which plays an important role in the proof of Theorem \ref{thm: strengthened}.

For integers $a\le b$, set $[a,b]=\{a,a+1,\dots,b\}$. 
 For an integer $s$ and a positive integer $k$, let $\langle s\rangle_k$ denote the unique
integer in $[0,k-1]$ congruent to $s$ modulo $k$.
 For nonempty subsets $X_1,\dots,X_r$ of an abelian group, let
$
X_1+\cdots+X_r
=\{x_1+\cdots+x_r:x_i\in X_i, \ i\in[1,r]\}.
$

\begin{lemma}[\cite{Kneser1953}, see also~\cite{DeVos2014}]\label{lem: Kneser}
Let $X,Y$ be finite nonempty subsets of an abelian group $\Gamma$, and let $K=\{z\in\Gamma:(X+Y)+z=X+Y\}$. Then $K$ is a finite subgroup of $\Gamma$ and
$$
|X+Y|\ge|X+K|+|Y+K|-|K|.
$$
\end{lemma}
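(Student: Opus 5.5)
We take the finiteness and subgroup claims first, then reduce the inequality to the case of trivial stabilizer, and finally prove that case by induction on $|Y|$ using Dyson's $e$-transform.

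\textbf{Finiteness of $K$ and the subgroup property.} Clearly $0\in K$, and $K$ is closed under addition. Since $X+Y$ is finite, $z\mapsto z+(X+Y)$ is a bijection of $X+Y$ for $z\in K$, so $-z\in K$ as well; hence $K$ is a subgroup. Fixing $a\in X+Y$ gives $a+K\subseteq X+Y$, so $|K|\le|X+Y|$ and $K$ is finite.

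\textbf{Reduction to trivial stabilizer.} Replace $X,Y$ by $X'=X+K$ and $Y'=Y+K$. Because $X+Y+K=X+Y$, we have $X'+Y'=X+Y$, so the stabilizer is still $K$, and the desired inequality becomes $|X'+Y'|\ge|X'|+|Y'|-|K|$. Now $X'$ and $Y'$ are unions of $K$-cosets, so we pass to $\Gamma/K$. Let $\bar X,\bar Y$ be the images. Then $|\bar X+\bar Y|=|X+Y|/|K|$, $|\bar X|=|X'|/|K|$, $|\bar Y|=|Y'|/|K|$, and the stabilizer of $\bar X+\bar Y$ in $\Gamma/K$ is trivial. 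It therefore suffices to prove:
\begin{center}
if the stabilizer of $X+Y$ is trivial, then $|X+Y|\ge|X|+|Y|-1$.
\end{center}

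\textbf{Induction on $|Y|$ via the $e$-transform.} The case $|Y|=1$ is trivial. For $e\in X-Y$, set
$$
X(e)=X\cup(Y+e),\qquad Y(e)=Y\cap(X-e).
$$
Then $Y(e)\neq\emptyset$, $X(e)+Y(e)\subseteq X+Y$, and $|X(e)|+|Y(e)|=|X|+|Y|$.

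If $Y+e\subseteq X$ for every $e\in X-Y$, then $X+(Y-Y)\subseteq X$, so $X$ is stable under the subgroup $H$ generated by $Y-Y$. Then $X+Y=X+y+H$ for any $y\in Y$, and $H$ stabilizes $X+Y$. Triviality of the stabilizer forces $|Y|=1$, which is the base case.

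Otherwise, choose $e$ with $|Y(e)|<|Y|$ and apply induction to $(X(e),Y(e))$, letting $H$ be the stabilizer of $X(e)+Y(e)$. If $H$ is trivial, we get $|X+Y|\ge|X(e)+Y(e)|\ge|X|+|Y|-1$ and are done.

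\textbf{The main obstacle: $H$ nontrivial.} This is where I expect the real work. Induction gives only $|X(e)+Y(e)|\ge|X(e)+H|+|Y(e)+H|-|H|$. I would try the classical argument here. Choose $e$ to minimise $|Y(e)|$ (or maximise the size of $H$) among admissible transforms. Compare $X+Y$ with the $H$-periodic set $X(e)+Y(e)$, and count the $H$-cosets met by $X+Y$ but not by $X(e)+Y(e)$: each such coset contributes a set of the form $(x+H)\cap X + (y+H)\cap Y$. Applying the induction hypothesis inside a single coset, combined with the trivial estimate $|A+B|\ge\max(|A|,|B|)$, should supply the missing $|H|-1$.

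The delicate point is that if $X+Y$ were a union of full $H$-cosets, then $H$ would stabilize it, contradicting the assumption. So at least one coset is only partially filled, and this is exactly what yields the loss of just $1$ rather than $|H|$. If this bookkeeping becomes unwieldy, I would instead follow DeVos's streamlined induction cited above, which runs the same $e$-transform argument on $|X+Y|+|Y|$ with a cleverly chosen minimal counterexample.
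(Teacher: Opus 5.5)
Your preliminary steps are correct. This covers the subgroup and finiteness argument and the passage to $\Gamma/K$. The quotient is compatible with induction on $|Y|$, since $|\bar Y|=|Y+K|/|K|\le|Y|$, so the full inequality really is available for $(X(e),Y(e))$. It also covers the basic properties of the $e$-transform and the case $Y+e\subseteq X$ for all $e\in X-Y$. The paper does not prove this lemma; it cites Kneser and DeVos, so the only question is whether your argument stands on its own.

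It does not. The case where $H=\mathrm{stab}\bigl(X(e)+Y(e)\bigr)$ is nontrivial is the whole substance of Kneser's theorem, and you describe it without proving it. Put $h=\bigl(|X(e)+H|-|X(e)|\bigr)+\bigl(|Y(e)+H|-|Y(e)|\bigr)$. Induction gives $|X(e)+Y(e)|\ge|X|+|Y|-|H|+h$. So, when this bound is tight, you must find at least $|H|-1-h$ elements of $X+Y$ outside the $H$-periodic set $X(e)+Y(e)$. Your stated reason, that $X+Y$ is not a union of $H$-cosets, only shows $X+Y\ne X(e)+Y(e)$. That gives at least one such element. It says nothing about losing $1$ rather than $|H|$.

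The proposed coset count does not supply the rest either. A new coset $\gamma$ meets $X+Y$ in $\bigcup_{\alpha+\beta=\gamma}\bigl((X\cap\alpha)+(Y\cap\beta)\bigr)$, where $\alpha,\beta$ range over $H$-cosets. This is a union of such pieces, not a single set of that form. Applying the induction hypothesis to one piece yields only $|(X\cap\alpha)+K'|+|(Y\cap\beta)+K'|-|K'|$. Here $K'\le H$ is the stabilizer of that piece, and it can be a nontrivial proper subgroup.

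Nothing in your setup produces a pair $\alpha,\beta$ with $\alpha+\beta$ new and $|X\cap\alpha|+|Y\cap\beta|$ close to $|H|$. The bound $\max(|X\cap\alpha|,|Y\cap\beta|)$ is weaker still. You do not show that either extremal choice of $e$ you mention changes this.

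An additional, global idea is needed. For example, take $X$ and $Y$ maximal for their sumset. This is harmless, because Kneser's inequality for an enlarged pair with the same sumset implies it for the original. It does mean organizing the induction around $|X+Y|$ rather than $|Y|$. Then $X+(Y(e)+H)$ and $(Y(e)+H+e)+Y$ both lie in $X(e)+Y(e)\subseteq X+Y$, which forces $Y(e)+H=Y(e)$. Even then, a further argument is needed to reach a contradiction. As written, your proposal proves the reductions but not Kneser's inequality itself.
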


\begin{lemma}\label{lem: additive}
Let $k\ge5$ be an odd integer, let $a\in[0,k-1]$, and let $X,Y$ be nonempty subsets of $\mathbb{Z}_k$.
If $X=[0,a]$ or $X=[0,a]\setminus\{r\}$ for some $r\in[0,a]$, then
$$
|X+Y|\ge\min\{k,\,|X|+|Y|-1\}.
$$
\end{lemma}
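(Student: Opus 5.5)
We apply Kneser's theorem (Lemma~\ref{lem: Kneser}) to $X$ and $Y$ in $\Gamma=\mathbb{Z}_k$. Let $K$ be the stabilizer of $X+Y$. Then $K$ is a subgroup of $\mathbb{Z}_k$, so $|K|$ divides $k$, and $X+Y$ is a union of cosets of $K$.

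If $K=\mathbb{Z}_k$, then $X+Y=\mathbb{Z}_k$ and $|X+Y|=k$, so the bound holds. If $K=\{0\}$, Kneser's inequality reads $|X+Y|\ge|X|+|Y|-1$, which is again the bound. So the real case is $K$ a proper nontrivial subgroup. Then $|K|=d$ with $d\mid k$ and $1<d<k$. Since $k$ is odd, $d\ge3$, and $K=\langle k/d\rangle$ has index $m=k/d\ge3$. In this case Kneser gives
$$|X+Y|\ge|X+K|+|Y+K|-|K|\ge |X+K|+|Y|-d,$$
so it suffices to show $|X+K|\ge |X|+d-1$. (If $|X+K|=k$, then $X+Y\supseteq X+K+y$ gives $|X+Y|=k$ directly.)

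The key step is to understand $X+K$ when $X$ is an interval $[0,a]$, possibly with one point removed. The cosets of $K$ are the residue classes modulo $m$. An interval of length $a+1<k$ meets exactly $\min\{m,a+1\}$ of these classes, since consecutive integers hit consecutive residues modulo $m$. So $|X+K|=d\cdot\min\{m,a+1\}$. Removing one point $r$ can lower the number of classes met only when $a+1\le m$, where each class contains exactly one point of $X$.

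The computation then splits as follows.
\begin{itemize}
\item If $a+1\le m$: every class met contains exactly one point of $X$, so $|X+K|=d|X|$. Then $d|X|\ge|X|+d-1$ is equivalent to $(d-1)(|X|-1)\ge0$, which holds.
\item If $a+1>m$: all $m$ classes are met, so $X+K=\mathbb{Z}_k$ and $|X+Y|=k$. This holds even after removing $r$, because then $|X|\ge m$ and the remaining points still cover all residues, except possibly the residue of $r$ when $a+1=m+1$ exactly.
\end{itemize}

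The main obstacle is this boundary case of the punctured interval, $X=[0,m]\setminus\{r\}$. Here the residue of $r$ is still covered unless $r\in\{0,m\}$. If $r\in\{0,m\}$, then $X$ is again an interval, of length $m$, so it meets all $m$ classes. If $r$ is interior, $X$ contains both $0$ and $m$, which share a residue, and all other residues are present, so all classes are covered. Hence $X+K=\mathbb{Z}_k$ in every subcase.

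A final check is that $|X|+|Y|-1$ versus $k$ causes no trouble in the degenerate situations $a=0$ or $X$ a single point. There $|X|=1$ and the bound $|X+Y|=|Y|$ is trivial. The hypothesis $k\ge5$ guarantees the group and the intervals are nondegenerate enough for this case analysis to go through, though the argument above seems to need only that $k$ is odd.
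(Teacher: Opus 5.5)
\textbf{There is a genuine gap in the coset count for the punctured interval.} Your overall strategy matches the paper's: apply Kneser's theorem with stabilizer $K$, reduce to $|X+K|\ge|X|+d-1$, and count the cosets that $X$ meets. But the case you discard is exactly where the content lies.

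You claim that for $a+1>m$ the set $X=[0,a]\setminus\{r\}$ meets every class modulo $m$, except possibly when $a=m$. You also claim that for $a=m$ an interior $r$ is harmless. Both claims are false.

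\begin{itemize}
\item Whenever $m\le a\le 2m-2$, the interval $[0,a]$ meets the classes $a-m+1,\dots,m-1$ only once. Removing an $r$ from such a class leaves $X$ meeting only $m-1$ classes.
\item For $a=m$, these are precisely the interior points $r\in[1,m-1]$. Only $0$ and $m$ share a class, so your sentence ``the residue of $r$ is still covered unless $r\in\{0,m\}$'' is backwards.
\end{itemize}

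Concretely, take $k=9$, $K=\{0,3,6\}$, $X=[0,3]\setminus\{1\}=\{0,2,3\}$ and $Y=K$. Then $X+Y=X+K=\{0,2,3,5,6,8\}$, whose stabilizer is $K$. So $X+K\ne\Z_9$ and $|X+Y|=6$, whereas your argument would give $|X+Y|=9$.

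A symptom of the gap is that, after noting $d,m\ge3$, your argument never uses these bounds. Yet the statement fails for even $k$: with $k=4$, $X=\{0,2\}=[0,2]\setminus\{1\}$ and $Y=\{0,2\}$, one gets $|X+Y|=2<3$. Your reasoning would ``prove'' this false case via the same wrong interior-$r$ claim.

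\textbf{How to close the gap.} Treat the case $m\le a\le 2m-2$ in which $X$ misses one class directly.
\begin{itemize}
\item Here $|X+K|=d(m-1)$ and $|X|=a$. So you need $d(m-1)\ge a+d-1$, that is, $d(m-2)\ge a-1$.
\item This follows from $d\ge3$, $m\ge3$ and $a\le 2m-2$, since $d(m-2)\ge 3m-6\ge 2m-3\ge a-1$.
\item For $a\ge 2m-1$, every class contains at least two points of $[0,a]$. Removing $r$ therefore changes nothing, and $X+K=\Z_k$.
\end{itemize}
This is exactly the paper's computation: with $q=|K|$ and $\ell=k/q$, it shows $|X+K|-q\ge q(\ell-2)\ge3(\ell-2)\ge2\ell-3\ge a-1$. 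It is the one place where the oddness of $k$ is genuinely needed.
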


\begin{proof}
For every nonempty proper subset $S\subset \Z_k$, we have $|S+\{0,1\}|=|S\cup(S+1)|\ge|S|+1$.
If $X=[0,a]$, then we repeatedly apply this inequality to obtain the desired bound. Therefore,  assume that $X=[0,a]\setminus\{r\}$ and $X+Y\ne\Z_k$. 

Let $K=\{z\in\Z_k:(X+Y)+z=X+Y\}$. By Lemma~\ref{lem: Kneser}, $|X+Y|\ge|X+K|+|Y+K|-|K|$,
which gives the desired bound if $K=\{0\}$. We may therefore assume that $K\ne\{0\}$.

Let $q=|K|$ and $\ell=k/q$. Since $X+Y\ne\Z_k$, the subgroup $K$ is proper, and so $q,\ell\ge3$. The cosets of $K$ are the residue classes modulo $\ell$. Moreover, since $Y$ is nonempty and $X+Y+K=X+Y\ne\Z_k$, we have $X+K\ne\Z_k$.

If $a\le\ell-1$, then the elements of $X$ lie in distinct cosets of $K$, and hence $|X+K|-q=q(a-1)\ge a-1$.  Suppose now that $a\ge\ell$. Then $[0,a]$ meets every coset of $K$, and so $X$ meets at least $\ell-1$ cosets. If $a\ge2\ell-1$, each coset contains at least two elements
of $[0,a]$ and hence still meets $X$ after $r$ is removed, contradicting $X+K\ne\Z_k$.
Thus $a\le2\ell-2$ and $|X+K|-q\ge q(\ell-2)\ge3(\ell-2)\ge2\ell-3\ge a-1$.

In either case, since $|Y+K|\ge|Y|$, by Lemma~\ref{lem: Kneser}, we have $|X+Y|\ge a+|Y|-1=|X|+|Y|-1$, as required.
\end{proof}

\begin{lemma}[{\cite{FLS}}]\label{thm: avoidance}
Let $G$ be a loopless graph, let $B\subseteq V(G)$, and let
$L_v\subseteq[0,d_G(v)]$ for each $v\in B$.
If
$$
\sum_{v\in A}|L_v|\le e_G(A,A)+d_G(A)\quad \forall A\subseteq B,
$$
then $G$ has a spanning subgraph $H$ such that
$d_H(v)\notin L_v$ for every $v\in B$.
\end{lemma}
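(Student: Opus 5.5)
We would prove this by the Combinatorial Nullstellensatz, using Hall's theorem to certify that a suitable monomial has nonzero coefficient.

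\textbf{Setting up the polynomial.} Introduce a variable $x_e$ for each edge $e\in E(G)$. Over $\mathbb{R}$, consider
$$
P(x)=\prod_{v\in B}\ \prod_{\ell\in L_v}\Bigl(\sum_{e\in\delta_G(v)}x_e-\ell\Bigr).
$$
A spanning subgraph $H$ corresponds to a vector $x\in\{0,1\}^{E(G)}$, with $x_e=1$ exactly when $e\in E(H)$. Since $G$ is loopless, $\sum_{e\in\delta_G(v)}x_e=d_H(v)$. Hence $P(x)\ne0$ holds precisely when $d_H(v)\notin L_v$ for every $v\in B$. So it suffices to find $x\in\{0,1\}^{E(G)}$ with $P(x)\neq0$.

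\textbf{Applying the Nullstellensatz.} The polynomial $P$ has degree $N=\sum_{v\in B}|L_v|$. By the Combinatorial Nullstellensatz with all sets equal to $\{0,1\}$, it is enough to exhibit a squarefree monomial $\prod_{e\in S}x_e$ with $|S|=N$ whose coefficient in $P$ is nonzero. Such a monomial has top degree, so only the linear parts $\sum_{e\in\delta_G(v)}x_e$ of the factors contribute to its coefficient. That coefficient therefore equals the number of bijections from the multiset of factors onto $S$ in which each factor belonging to $v$ is sent to an edge of $\delta_G(v)$. Every such bijection contributes $+1$, so there is no cancellation. Consequently the coefficient is a positive integer as soon as at least one such bijection exists.

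\textbf{Hall's condition.} Form a bipartite graph with one side consisting of $|L_v|$ copies of each $v\in B$ and the other side $E(G)$. Join a copy of $v$ to $e$ whenever $e\in\delta_G(v)$. A matching saturating all copies gives the required set $S$ and bijection.

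We check Hall's condition on a set $C$ of copies. Let $A\subseteq B$ be the set of vertices having a copy in $C$. The neighbourhood of $C$ is the set of edges incident with $A$. Since $G$ is loopless, this set has size $e_G(A,A)+d_G(A)$. By hypothesis,
$$
|C|\le\sum_{v\in A}|L_v|\le e_G(A,A)+d_G(A),
$$
so Hall's condition holds and the matching exists.

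\textbf{Conclusion and main obstacle.} The Nullstellensatz now yields $x\in\{0,1\}^{E(G)}$ with $P(x)\neq0$, and the corresponding $H$ is the desired spanning subgraph. The only delicate step is justifying that the coefficient does not vanish. This relies on every matching contributing with the same sign. That holds because the constants $-\ell$ never enter a top-degree monomial, so each contribution is $+1$. Looplessness is used to ensure that each edge lies in $\delta_G(v)$ for exactly its two distinct ends. It is also what makes the edge count $e_G(A,A)+d_G(A)$ correct.
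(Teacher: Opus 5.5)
Your proof is correct. The top-degree part of $P$ is $\prod_{v\in B}(\sum_{e\in\delta_G(v)}x_e)^{|L_v|}$, and all its coefficients are nonnegative. So a single Hall matching certifies a nonzero squarefree monomial of degree $N$, and the Combinatorial Nullstellensatz with all sets equal to $\{0,1\}$ finishes the argument.

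The paper gives no proof of this lemma; it imports it from [FLS]. Yours is therefore a different, self-contained route.

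By Hakimi's theorem, the displayed inequality says exactly that $G$ has an orientation in which every $v\in B$ has in-degree at least $|L_v|$. This is a common way to state such forbidden-degree results. Your Hall matching produces such an orientation directly: direct each matched edge toward the vertex whose copy it serves, and the other edges arbitrarily.

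From that orientation there is also an algebra-free proof by induction on $|E(G)|$.
\begin{itemize}
\item If $0\notin L_v$ for every $v\in B$, the edgeless spanning subgraph works.
\item Otherwise, pick $v$ with $0\in L_v$. Then $v$ has an edge $uv$ directed into it. Put $uv$ into $H$ and replace $L_v$ by $\{\ell-1:\ell\in L_v,\ \ell\ge1\}$. If $u\in B$, replace $L_u$ by $\{\ell-1:\ell\in L_u,\ \ell\ge1\}$. Recurse on $G-uv$ with the inherited orientation.
\end{itemize}
This preserves the hypothesis: $L_v$ loses exactly one element while $v$ loses one in-edge, and $u$ keeps its in-degree while $|L_u|$ does not grow.

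Both proofs use the counting hypothesis only through Hall's theorem. The inductive argument is constructive and gives a polynomial-time procedure. Yours is nonconstructive, but it is very short once Alon's theorem is available, and it shows transparently why the hypothesis is exactly the condition for the degree-$N$ monomial to survive.
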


\begin{lemma}\label{lem: star}
Let $k\ge5$ be an odd integer, and let $G$ be a loopless graph
with maximum degree $\Delta=3k-5$ and multiplicity at most $k-2$. Let $A\subseteq V(G)$ satisfy $d_G(A)\le 3k-4$, and set 
$$C=A\cap V_\Delta(G), \qquad b_v=e_G(v,V(G)\setminus A) \quad \forall v\in A.$$
Suppose that $|C|\ge3$, and let $u,w\in C$ be distinct vertices
such that $b_u\le b_v\le b_w$ for every $v\in C$.
Then, for any $s\in\Z_k$ and any choice of residues $f_v\in\Z_k$ for $v\in C\setminus\{u,w\}$,
there exists a spanning subgraph $H$ of $G[A]$ satisfying
$$
|E(H)|\equiv s\pmod{k},
\qquad
d_H(v)\not\equiv f_v\pmod{k}
\quad \forall v\in C\setminus\{u,w\}.
$$
\end{lemma}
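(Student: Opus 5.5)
The plan is to split the construction into two stages. First fix the edges of $G[A]$ not incident with $u$ or $w$. Then choose the edges at $u$ and $w$ one neighbour at a time, so that the possible values of $|E(H)|$ form a sumset in $\Z_k$. Lemma~\ref{lem: additive} will show that this sumset is all of $\Z_k$.

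\textbf{Degree counting.} Write $C'=C\setminus\{u,w\}$. Since $|C|\ge3$ and $\sum_{v\in C}b_v\le d_G(A)\le 3k-4$, the choice of $u$ gives $b_u\le k-2$. Hence $d_{G[A]}(u)=\Delta-b_u\ge 2k-3$. The same averaging bounds $b_u+b_v$ for $v\in C'$. I also expect to need the weaker bound $d_{G[A]}(u)+d_{G[A]}(w)\ge 2\Delta-(3k-4)+b_v$. For each neighbour $x$ of $u$ or $w$ in $A$, let $m_x$ be the total number of edges from $\{u,w\}$ to $x$. Edges between $u$ and $w$ are treated as a separate block. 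The multiplicity bound gives $m_x\le 2k-4$.

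\textbf{Stage 1 (edges avoiding $u,w$).} Apply Lemma~\ref{thm: avoidance} to $G'=G[A]-\{u,w\}$ with $B=C'$. For $v\in C'$ the forbidden list $L_v$ is chosen so that the final degree $d_{H_0}(v)+x_v$, with $x_v$ the number of chosen edges from $\{u,w\}$ to $v$, equals $f_v$ mod $k$ for at most one value $x_v\in[0,m_v]$. Since $m_v\le 2k-4<2k$, the values $x_v\in[0,m_v]$ hit at most two residues congruent to $f_v-d_{H_0}(v)$. Concretely, $L_v$ consists of the integers $t\in[0,d_{G'}(v)]$ for which two such values exist, i.e.\ $f_v-t\equiv j$ with $j\le m_v-k$.

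Because $d_{G'}(v)\le 3k-5$, each $L_v$ has at most about $3$ elements, and in many cases fewer. The hypothesis of Lemma~\ref{thm: avoidance}, $\sum_{v\in A'}|L_v|\le e_{G'}(A',A')+d_{G'}(A')$, then follows from the fact that each $v\in C'$ has degree $3k-5$ in $G$. Most of that degree stays in $G'$, because at most $b_v+m_v$ edges leave; the degree bound and the edge-count total control this.

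\textbf{Stage 2 (edges at $u$ and $w$).} Given $H_0$, for each neighbour $x$ of $u$ or $w$ let $X_x\subseteq\Z_k$ be the set of residues of admissible counts. This is $X_x=[0,m_x]$ when $x\notin C'$. When $x\in C'$, it is $[0,m_x]\setminus\{r_x\}$, with $r_x$ the unique forbidden count left by Stage 1. If $m_x\ge k$, I would truncate $[0,m_x]$ to $[0,k-1]$ before removing $r_x$ in the appropriate range; that range also has to be controlled in Stage 1.

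Then $|E(H)|$ ranges over $|E(H_0)|+\sum_x X_x$. Iterating Lemma~\ref{lem: additive} (which needs $k\ge5$) gives
$$\Bigl|\sum_x X_x\Bigr|\ge\min\Bigl\{k,\ 1+\sum_x(|X_x|-1)\Bigr\}.$$
The right-hand side is at least $\min\{k,\,1+d_{G[A]}(u)+d_{G[A]}(w)-e(\{u,w\},C')\}$ up to the truncation losses. Using $d_{G[A]}(u)\ge 2k-3$ and bounding the number of $C'$-neighbours, this should be at least $k$. So every $s$ is attained.

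\textbf{Main obstacle.} I expect the bookkeeping linking the two stages to be the hardest step. Stage 1's lists must be small enough for the inequality in Lemma~\ref{thm: avoidance}, which is tight when $C'$ is large and dense. At the same time they must guarantee a single forbidden value in Stage 2, so that Lemma~\ref{lem: additive} applies, and the truncations must not waste more than $k-1$ units in total. If this fails, my first fallback is to put $u,w$-edges into the $X_x$ only for $x\notin C'$, and to put all edges from $\{u,w\}$ to $C'$ into Stage 1. Then I would show that $d_{G[A]}(u)+d_{G[A]}(w)-e(\{u,w\},C')\ge k-1$ still holds, using $b_w\ge b_v$ together with $d_G(A)\le 3k-4$.
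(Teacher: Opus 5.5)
There is a genuine gap in Stage~2, and it comes from how Stage~1 is targeted. A vertex $x\in C'$ joined to $\{u,w\}$ by a single edge gets no list in your Stage~1, since you only forbid values when $m_x\ge k$. So after an arbitrary $H_0$, the set $X_x=[0,1]\setminus\{\langle f_x-d_{H_0}(x)\rangle_k\}$ may be a singleton, and $x$ contributes nothing to $\sum_x(|X_x|-1)$. Nothing bounds the number of such neighbours. Let $G$ be a $(3k-5)$-regular simple graph, $A=V(G)$, and $u,w$ at distance at least $3$. Then every edge at $u$ or $w$ goes to $C'$ as a single edge. Your own bound $1+d_{G[A]}(u)+d_{G[A]}(w)-e(\{u,w\},C')$ equals $1$, and the quantity your fallback needs to be at least $k-1$ equals $0$.

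The missing idea is to use the avoidance step precisely to make such single edges free. The paper takes $u$ as the only centre and splits $C'$ into $W_1$ (one edge to $u$) and $W_2$ (at least two). For $v\in W_1$ it forbids both residues $f_v$ and $f_v-1$ when choosing edges of $F=G[A]-E(Q)$, so the edge $uv$ becomes a free summand $[0,1]$. For $v\in W_2$, all $e_G(u,v)\le k-2$ edges go to the sumset with one residue removed. The avoidance hypothesis is checked via $2|L_v|\le t_v$, which needs $b_v\le\Delta/2$. This is exactly why $w$, the vertex with maximum $b_w$, is exempted; it is not a second centre. For $k\ge7$ the slack is then $d_{G[A]}(u)-|W_2|\ge\lceil d_{G[A]}(u)/2\rceil\ge k-1$. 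For $k=5$, the bound $2|L_v|\le t_v$ can fail when $b_v\ge2$. So only $W_1$-vertices with $b_v\le1$ (plus at most one with $b_v=2$) are freed, and $d_G(A)\le 11$ is used to show the slack is still at least $4$.

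Using $w$ as a second centre also causes trouble in the other direction: your Stage~1 requirement for $m_v\ge k$ is unnecessary and can be infeasible. It is unnecessary because when $m_x\ge k-1$, the residues of admissible counts in $[0,m_x]$ already form $\Z_k\setminus\{j\}$, which has the form required in Lemma~\ref{lem: additive}. It can be infeasible because the lists are not small. Take $k=9$ and $A=V(G)=\{u,w,v_1,v_2,v_3\}$, with $7$ parallel edges from each of $u,w$ to each $v_i$, one edge $uw$, and $4$ parallel edges on each pair $v_iv_j$. This graph is $22$-regular, has multiplicity $7$, and satisfies $d_G(A)=0$. Here $m_{v_i}=14$ and $|L_{v_i}|=6$, not about $3$. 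Your condition forces $d_{H_0}(v_i)\equiv f_{v_i}+1$, $f_{v_i}+2$ or $f_{v_i}+3$. With $f_{v_1}=f_{v_2}=8$ and $f_{v_3}=5$, this means $d_{H_0}(v_1),d_{H_0}(v_2)\le2$ and $d_{H_0}(v_3)\ge6$. That is impossible, since $d_{H_0}(v_3)\le d_{H_0}(v_1)+d_{H_0}(v_2)$. In the paper's scheme this example is easy: all $v_i$ lie in $W_2$, and the slack is at least $19$.
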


\begin{proof}
By the choice of $u$, we have $|C|b_u\le\sum_{v\in C}b_v\le d_G(A)\le \Delta+1=3k-4$.
Since $|C|\ge 3$, it follows that $b_u\le k-2$ and $d_{G[A]}(u)=\Delta-b_u\ge2k-3$.
For $v\in C\setminus \{u, w\}$, the choice of $w$ yields $2b_v\le b_v+b_w\le d_G(A)\le\Delta+1$, and so $b_v\le \frac{1}{2}\Delta$ since $\Delta$ is even.
Let
$$
W_1=\{v\in C\setminus \{u, w\} : e_G(u,v)=1\},
\qquad
W_2=\{v\in C\setminus \{u, w\} : e_G(u,v)\ge2\}.
$$

\begin{figure}[!t]
\scriptsize
\begin{center}
\begin{tikzpicture}[scale=0.8]
\path (0,3.3) coordinate (u);
\path (-4.5,3.0) coordinate (w);
\path (-4.5,0) coordinate (v2);
\path (-1.5,0) coordinate (vb);
\path (1.5,0) coordinate (v1);
\path (4.05,0) coordinate (va);
\path (4.95,0) coordinate (va2);

\draw[black!45, line width=0.5pt]
  (-4.5,0) ellipse (1.05cm and 0.45cm);
\draw[black!65, fill=black!6, line width=0.7pt]
  (-1.5,0) ellipse (1.05cm and 0.45cm);
\draw[black!45, line width=0.5pt]
  (1.5,0) ellipse (1.05cm and 0.45cm);
\draw[black!45, line width=0.5pt]
  (4.5,0) ellipse (1.05cm and 0.45cm);

\draw[line width=0.55pt] (u) -- (w);
\draw[line width=0.55pt] (u) to[bend right=9] (v2);
\draw[line width=0.55pt] (u) to[bend left=9] (v2);
\draw[line width=1.4pt] (u) -- (vb);
\draw[line width=0.55pt] (u) -- (v1);
\draw[line width=0.55pt] (u) -- (va);
\draw[line width=0.55pt] (u) -- (va2);
\draw[line width=0.55pt] (u) to[bend left=12] (va2);

\foreach \v in {u, w, v2, vb, v1, va, va2}
  \draw[fill=black] (\v) circle (0.08cm);

\node[above] at (u) {$u$};
\node[above] at (w) {$w$};
\node[below] at (-4.5,-0.48) {$W_2$};
\node[below] at (-1.5,-0.48) {$B$};
\node[below] at (1.5,-0.48) {$W_1\setminus B$};
\node[below] at (4.5,-0.48) {$A\setminus C$};

\end{tikzpicture}
\end{center}
\caption{The set $B\subseteq W_1$.}
\label{fig: select B}
\end{figure}
\begin{claim}\label{cl: select B}
There exists a subset $B\subseteq W_1$ (see Fig.~\ref{fig: select B}) such that
\begin{equation*}
\rho =d_{G[A]}(u)-|W_2|-|W_1\setminus B|\ge k-1.
\end{equation*}
Moreover, when $k=5$, $b_v\le1$ for every $v\in B$, with at most one exception for which $b_v=2$. 
\end{claim}

\begin{proof}[Proof of Claim \ref{cl: select B}.] If $k\ge7$, take $B=W_1$. Since $2|W_2|\le d_{G[A]}(u)$, we have
$$
\rho=d_{G[A]}(u)-|W_2|
\ge\left\lceil \frac{1}{2}d_{G[A]}(u)\right\rceil
\ge k-1.
$$
We now assume that $k=5$, and initially put $B=\{v\in W_1:b_v\le1\}$. If $\rho\ge4$, this choice has the required properties. We  assume further that $\rho\le3$.

Since $d_{G[A]}(u)\ge 2k-3=7$, we have $|C|\ge |W_1|+|W_2|+2\ge d_{G[A]}(u)-\rho+2\ge 6,$
which implies that $b_u\le \lfloor (3k-4)/|C| \rfloor\le1$ and $d_{G[A]}(u)=\Delta-b_u\ge9$.

Note that every vertex $v\in W_1\setminus B$ has $b_v\ge2$. If $|W_1\setminus B|\ge5$, then  $b_w\ge2$ and hence
$d_G(A)\ge\sum_{v\in W_1\setminus B}b_v+b_w
\ge2(|W_1\setminus B|+1)\ge12$,
contrary to $d_G(A)\le11$. Thus $|W_1\setminus B|\le4$.
Since $d_{G[A]}(u)\ge|W_1\setminus B|+2|W_2|$, we have $|W_2|\le\rho$ and 
$$
4\ge|W_1\setminus B|
=d_{G[A]}(u)-\rho-|W_2|
\ge9-2\rho
\ge3,
$$
which implies $\rho=3$ and $|W_1\setminus B|\ge3$. Furthermore, at least one vertex $v\in W_1\setminus B$ satisfies $b_v=2$, for otherwise $b_w\ge3$ and 
$
d_G(A)\ge\sum_{v\in W_1\setminus B}b_v+b_w
\ge3(|W_1\setminus B|+1)\ge12,
$
again a contradiction. Adding such a vertex to $B$ decreases $|W_1\setminus B|$
by one and therefore increases $\rho$ from $3$ to $4$.
Thus, the enlarged set $B$ satisfies the required properties. The claim holds.
\end{proof}

Let $Q$ be the spanning subgraph of $G[A]$ consisting of all
edges incident with $u$ except those joining $u$ to $W_1\setminus B$.  
Then $d_Q(u)=|W_2|+\rho$. Let $F=G[A]-E(Q)$ and for $v\in C\setminus(W_2\cup\{u,w\})$, let $t_v=d_F(v)=\Delta-b_v-d_Q(v)$. 
Then $d_Q(v)=1$ if $v\in B$ and $d_Q(v)=0$ otherwise.
Define
$$
L_v=\{j\in[0,t_v]:
j\equiv f_v\text{ or }f_v-d_Q(v)\pmod{k}\}.
$$

\begin{claim}\label{cl: Lv}
There exists a spanning subgraph $H'$ of $F$ such that
$d_{H'}(v)\notin L_v$ for every
$v\in C\setminus(W_2\cup\{u,w\})$.
\end{claim}

\begin{proof}[Proof of Claim~\ref{cl: Lv}]
We first prove that $2|L_v|\le t_v$ for every
$v\in C\setminus(W_2\cup\{u,w\})$, with at most one exception
satisfying $2|L_v|\le t_v+1$.
If $d_Q(v)=0$, then $t_v=\Delta-b_v\ge \frac{1}{2}\Delta\ge5$ and 
$$
|L_v|
\le\lfloor \frac{1}{k}t_v\rfloor+1
\le \frac{1}{5}t_v+1
\le \frac{1}{2}t_v.
$$

If $d_Q(v)=1$ and $k\ge7$, then
$t_v=\Delta-b_v-1\ge \frac{1}{2}\Delta-1\ge7$.
When $t_v=7$, we have $k=7$ and $|L_v|\le3$ since only the residue $0$ has two representatives in $[0,7]$.
When $t_v\in\{8,9\}$, each forbidden residue has at most
two representatives and thus $|L_v|\le4\le \frac{1}{2}t_v$.
When $t_v\ge10$, we have
$$
|L_v| \le2 \left\lfloor \frac{1}{k}t_v\right\rfloor+2 \le \frac{2}{7}t_v+2 \le \frac{1}{2}t_v.
$$

If $d_Q(v)=1$ and $k=5$, then $v\in B$ and $t_v=9-b_v$.
By Claim \ref{cl: select B}, we have $t_v\in\{8,9\}$,
except possibly at one vertex where $t_v=7$.
Since each residue class modulo $5$ has at most two
representatives in $[0,9]$, we have $|L_v|\le4$.
Thus $2|L_v|\le t_v$ when $t_v\in\{8,9\}$, while
$2|L_v|\le t_v+1$ at the possible exceptional vertex.
This proves the required bounds on $|L_v|$.

Since at most one vertex is exceptional, for every
$S\subseteq C\setminus(W_2\cup\{u,w\})$, we have
$$
\sum_{v\in S}|L_v|
\le\left\lfloor\frac{1}{2}\left(\sum_{v\in S}t_v+1\right)\right\rfloor
=e_F(S,S)+\left\lceil\frac{1}{2}d_F(S)\right\rceil
\le e_F(S,S)+d_F(S).
$$
Here the equality follows from
$\sum_{v\in S}t_v=2e_F(S,S)+d_F(S)$.
By Lemma~\ref{thm: avoidance}, $F$ has 
a spanning subgraph $H'$ with the required property.
\end{proof}

Fix such a subgraph $H'$.  For each $v\in A\setminus\{u\}$, let
$$
Y_v=
\begin{cases}
[0, e_G(u,v)]\setminus\{\langle f_v-d_{H'}(v)\rangle_k\},
&v\in W_2,\\
[0,d_Q(v)],
&v\notin W_2.
\end{cases}
$$
Since $e_G(u,v)\le k-2$, each $Y_v$ is a nonempty interval in $[0,k-1]$ with at most one element removed. Moreover, Claim~\ref{cl: select B} gives
$
\sum_{v\in A\setminus\{u\}}(|Y_v|-1)
\ge d_Q(u)-|W_2|
=\rho
\ge k-1.
$
Repeatedly applying Lemma~\ref{lem: additive}, we obtain $\sum_{v\in A\setminus\{u\}}Y_v=\Z_k$.
Therefore, we can choose $x_v\in Y_v$ such that
$$
\sum_{v\in A\setminus\{u\}}x_v
\equiv s-|E(H')|\pmod{k}.
$$

For each $v\in A\setminus\{u\}$, add $x_v$ edges of $Q$ joining $u$ and $v$ to $H'$, and denote the resulting spanning subgraph by $H$. Then $|E(H)|\equiv s\pmod{k}$. For $v\in W_2$, the definition of $Y_v$ excludes the residue
$\langle f_v-d_{H'}(v)\rangle_k$, and so $d_H(v)=d_{H'}(v)+x_v\not\equiv f_v\pmod{k}$.
For $v\in C\setminus(W_2\cup\{u,w\})$, we have
$x_v\in\{0,d_Q(v)\}$.
Since $d_{H'}(v)\notin L_v$ by Claim \ref{cl: Lv}, neither $d_{H'}(v)$ nor
$d_{H'}(v)+d_Q(v)$ is congruent to $f_v$ modulo $k$,
and hence $d_H(v)=d_{H'}(v)+x_v\not\equiv f_v\pmod{k}$.
Thus $H$ satisfies all the required conditions of the lemma.
\end{proof}


\section{Proof of Theorem~\ref{thm: main}}
\label{sec: proof}

In this section, we complete the proof of Theorem \ref{thm: main}. The case $k=3$ follows from  Theorem \ref{thm: WYZZ}. Thus, we pay attention to the remaining case $k\ge 5$. We prove a stronger statement, which allows one vertex to have smaller even degree.

\begin{theorem}\label{thm: strengthened}
Let $k\ge5$ be an odd integer and let $G$ be an essentially
$k$-unbalanced signed graph.
Let $V_0=\emptyset$ or $V_0=\{v_0\}$, where $v_0$ is not incident with any loop and has even degree at most $6k-12$.
If $|V(G)\setminus V_0|\ge 2$ and 
\begin{equation}\label{eq: local conn}
\lambda_G(x,y)\ge6k-10
\quad\text{for all distinct }x,y\in V(G)\setminus V_0,
\end{equation}
then $G$ admits a modulo $k$-orientation.
\end{theorem}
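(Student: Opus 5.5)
We argue by a minimal counterexample $G$, minimizing $|V(G)|+|E(G)|$. First we record the easy reductions.

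Positive loops can be deleted, so we assume there are none. A negative loop at $v$ changes $\partial(v)$ by $\pm2$ and can be absorbed: we replace it by a negative edge to a suitable vertex or keep track of it as a freely chosen $\pm2$.

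If $\epsilon(G)$ is even and at most $k-1$, or equal to $k$, we switch $G$ so that $|E(G^-)|=\epsilon(G)$ and apply Lemma~\ref{lem:few}. This needs $3k-3$-edge-connectivity of the whole graph. If $v_0$ exists, we first split off pairs at $v_0$ using Lemma~\ref{lem:Mader}; this is possible since $d(v_0)$ is even. Lemma~\ref{lem:splitting-negativeness} and Lemma~\ref{le: contract} control how the negativeness changes. Once $v_0$ is split away completely it becomes isolated, and the local connectivity among the other vertices is preserved by Lemma~\ref{lem:Mader}. Hence we may assume $\epsilon(G)\ge k+1$, because essential $k$-unbalancedness excludes odd values below $k$. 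In this regime splitting one pair at $v_0$ keeps the graph essentially $k$-unbalanced, with $\epsilon\ge k-1$; the odd cases need care, and we would track parity using the identity $\sum\partial=2(2s-|E^-|)$.

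Minimality then gives structural properties.

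\textbf{(a) $V_0=\emptyset$.} If $v_0$ exists with $d(v_0)\ge4$, we split a Mader pair at $v_0$ and use minimality. We must check that the smaller graph is still essentially $k$-unbalanced; if it is not, we choose a different pair or a different switching. If $d(v_0)=2$, we split directly. If $d(v_0)=0$, we delete $v_0$.

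\textbf{(b) No nontrivial small cuts.} Every $A$ with $2\le|A|\le|V|-2$ satisfies $d_G(A)\ge 6k-9$, or even larger. Otherwise, for a cut of size exactly $6k-10$ or of even size at most $6k-12$, we contract one side and treat the contracted vertex as the new $v_0$. Apply minimality to both $G/A$ and $G/A^c$, using Lemma~\ref{le: contract} to preserve negativeness after switching to a minimum signature. Then glue the two orientations along the cut, matching the cut orientation. This is exactly why the strengthened statement allows a vertex of smaller even degree.

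\textbf{(c) Parallel edges and degrees.} Multiplicity is bounded: $k$ or more parallel edges of equal sign can be lifted or contracted. Every vertex has degree exactly $6k-10$, or else we split pairs at high-degree vertices while preserving connectivity.

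The core step treats the ordinary part via an auxiliary graph, in the spirit of Lemma~\ref{lem:few}. Using Lemma~\ref{lem:Mader} we split $G$ until it decomposes into a graph of maximum degree $\Delta=3k-5$ in the positive part plus negative edges. Alternatively, each vertex's $6k-10$ edges split into two halves. We then want to orient one half arbitrarily so that the prescribed boundaries $\beta$ become feasible for Lemma~\ref{thm: extension}. The obstruction is a set $A$ with $d(A)\le 3k-4$, where the required inequality $d(A)\ge 2k-2+|\tau(A)|$ may fail. Lemma~\ref{lem: star} is designed for exactly this: inside such an $A$ we choose a subgraph $H$ whose edge count is a prescribed residue $s$, which fixes $\beta(A)$ and hence $\tau(A)$. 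At the same time, for the degree-$\Delta$ vertices $v\in C\setminus\{u,w\}$, $d_H(v)\not\equiv f_v$, which avoids the bad boundary values $\tau(v)=\pm k$ at singletons. Reversing or choosing edges of $H$ in a pre-orientation then puts $\tau$ into the allowable range for every tight set, and Lemma~\ref{thm: extension}, with $z$ being the vertex collecting the negative-edge half-edges, yields the orientation.

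\textbf{Main obstacle.} The hardest part is the bookkeeping that makes the hypotheses of Lemma~\ref{lem: star} hold ($|C|\ge3$, multiplicity at most $k-2$, $d(A)\le3k-4$) for all dangerous sets at once. This amounts to showing that the dangerous sets are laminar or disjoint, so the choices can be made independently. We would first prove that two crossing dangerous sets violate (b) by submodularity; if the sets with $|C|\le2$ remain, they would be handled directly.
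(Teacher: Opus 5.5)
There is a genuine gap: the step that actually produces the orientation does not work as written. In (b), you cannot glue modulo $k$-orientations of $G/A$ and $G/A^{c}$. Minimality only gives you \emph{some} modulo $k$-orientation of each contraction, and nothing lets you prescribe how the cut edges are oriented at the contracted vertex. So the two orientations need not agree on $\delta_G(A)$; the statement being proved has no prescribed-boundary feature that would allow this.

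The core step fails from the other side. Lemma~\ref{thm: extension} only accepts a pre-oriented vertex $z$ with $d(z)\le 2k-2+|\tau(z)|\le 3k-2$. So $z$ cannot be ``the vertex collecting the negative-edge half-edges'': that vertex has degree $2|E(G^-)|\ge 2k+2$, with no upper bound. The graph of maximum degree $3k-5$ to which Lemma~\ref{lem: star} applies is $G^-$ itself. It is not obtained by further splitting, which would destroy $\lambda\ge 6k-10$. The laminarity plan for all dangerous sets is left unproved.

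There are also errors in your reductions that matter for Lemma~\ref{lem: star}:
\begin{itemize}
\item A vertex of degree $6k-10$ is reducible: split a Mader pair at it and declare it the new $v_0$ of degree $6k-12$. So the minimal counterexample is $(6k-9)$-regular. The odd degree, together with $d_{G^+}(v)\ge d_{G^-}(v)$ after switching to $|E(G^-)|=\epsilon(G)$, gives $d_{G^-}(v)\le 3k-5<3k-4\le d_{G^+}(v)$.
\item Your multiplicity bound is one too weak: Lemma~\ref{lem: star} needs $e_{G^-}(u,v)\le k-2$. The paper gets this by switching $k-1$ parallel negative edges to positive, contracting $\{u,v\}$, and splitting those edges as $a+b=k-1$ with $a-b$ an arbitrary residue.
\item When $\epsilon(G)=k$, Mader pairs at $v_0$ may join two negative edges and leave $k-2$ negative edges, so Lemma~\ref{lem:few} no longer applies. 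The paper pairs each negative edge at $v_0$ with a positive one. It then recovers $(3k-3)$-edge-connectivity by counting, using $d_G(v_0)\le 3k-5$: Lemma~\ref{lem:few} forces a cut of size at most $3k-4$, which must isolate $v_0$.
\end{itemize}

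The missing idea is a one-sided contraction followed by an extension.
\begin{itemize}
\item If $G^+$ is $(3k-3)$-edge-connected, orient $G^-$ first, choosing the number of source edges so the boundaries sum to $0$. Then apply Lemma~\ref{thm: extension} to $G^+$ plus an isolated $z$.
\item Otherwise take $A$ of minimum size with $|A|\ge 2$ and $d_{G^+}(A)\le 3k-4$. Then $d_{G^+}(A)\in\{3k-5,3k-4\}$, and every $B\subsetneq A$ with $|B|\ge 2$ has $d_{G^+}(B)\ge 3k-3$.
\item Apply minimality to $G/A$ only. This orients every edge outside $G^+[A]$; negative edges inside $A$ survive as negative loops.
\item Re-orient $G^-[A]$ via Lemma~\ref{lem: star} applied to $G^-$. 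Choose the sink edges so that $\sum_{v\in A}\partial\equiv 0$ is preserved and $\beta(v)\ne 0$ at each $v\in C\setminus\{u,w\}$, where $C=\{v\in A: d_{G^+}(v)=3k-4\}$.
\item Apply Lemma~\ref{thm: extension} to $H=G^+/(V(G)\setminus A)$ plus $3k-2-d_{G^+}(A)\in\{2,3\}$ padding edges at $z$. These edges touch $u$ and $w$, or all of $C$ if $|C|\le 2$. Take $D_z$ with $k-1$ edges out and $2k-1$ in, so $d_H(z)=3k-2=2k-2+|\tau(z)|$.
\end{itemize}
By minimality of $A$, the only sets that can violate condition (iii) are singletons $\{v\}$ with $d_H(v)=3k-4$ odd, that is, $v\in C\setminus\{u,w\}$. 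There $\beta(v)\ne 0$ forces $|\tau(v)|\le k-2$. This needs no laminarity analysis, since everything outside $A$ is absorbed into the orientation of $G/A$.
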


Taking $V_0=\emptyset$ in Theorem~\ref{thm: strengthened} yields Theorem~\ref{thm: main} for $k\ge5$ and $|V(G)|\ge2$. The case $|V(G)|\le1$ follows directly from essential $k$-unbalancedness.

\begin{proof}[Proof of Theorem~\ref{thm: strengthened}]
Suppose, to the contrary, that $G$ is a counterexample to Theorem \ref{thm: strengthened} with $|V(G)|+|E(G)|$ minimum.
Then $G$ has no positive loop and $v_0$, if present, is not isolated.  Hence $G$ is connected.
By switching if necessary, we may assume that $|E(G^-)|=\epsilon(G)$.
This implies that $d_{G^+}(B)\ge d_{G^-}(B)$ for every $B\subseteq V(G)$.

\begin{claim}\label{cl: negativeness}
$\epsilon(G)\ge k+1$.
\end{claim}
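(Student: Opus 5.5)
We argue by contradiction. Since $|E(G^-)|=\epsilon(G)$ and $G$ is essentially $k$-unbalanced, $\epsilon(G)$ is either even or at least $k$. So if $\epsilon(G)\le k$, then $|E(G^-)|$ is even and at most $k-1$, or $|E(G^-)|=k$. These are exactly the two cases covered by Lemma~\ref{lem:few}. It therefore suffices to show that Lemma~\ref{lem:few} (or a variant of it) applies to $G$, which would contradict $G$ being a counterexample.

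\textbf{Case $V_0=\emptyset$.} By \eqref{eq: local conn}, $G$ is $(6k-10)$-edge-connected. Since $6k-10\ge 3k-3$ for $k\ge5$, and $|V(G)|\ge2$, Lemma~\ref{lem:few} gives a modulo $k$-orientation directly.

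\textbf{Case $V_0=\{v_0\}$.} Here $G$ need not be $(3k-3)$-edge-connected, because cuts around $v_0$ may be small. I would remove $v_0$ by splitting, as follows.
\begin{enumerate}
\item $d_G(v_0)$ is even, and $v_0$ is not isolated by minimality. If $d_G(v_0)\ge4$, the edges at $v_0$ are not cut-edges: a cut-edge at $v_0$ would separate two vertices of $V(G)\setminus V_0$ by a cut of size~$1$, or would leave $v_0$ with odd degree on its side.
\item Apply Lemma~\ref{lem:Mader} repeatedly at $v_0$ to split off all its edges in pairs. This preserves $\lambda(x,y)\ge 6k-10$ for all $x,y\ne v_0$. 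The case $d_G(v_0)=2$ is a single split, trivially fine.
\item Call the resulting graph $G'$; delete the isolated vertex $v_0$. A split may create a loop, namely when both edges go to the same vertex; a negative loop is harmless, and positive loops are deleted.
\item By Lemma~\ref{lem:splitting-negativeness}, each split lowers the number of negative edges by at most $2$, but splitting only merges edges, so $|E(G'^-)|\le|E(G^-)|$. More directly, the parity of the number of negative edges is preserved: a new edge is negative iff exactly one of the two old edges was, and positive loops deleted do not affect parity. Hence $|E(G'^-)|$ is even and at most $k-1$, or odd and at most $k$.
\item $G'-v_0$ is $(3k-3)$-edge-connected and has at least two vertices.
\end{enumerate}

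The odd case needs care: Lemma~\ref{lem:few} requires exactly $k$ negative edges when the count is odd, and an odd count below $k$ would be a problem. But negative loops contribute $\pm2$ to the boundary and can be handled in the parity count. More simply, $\epsilon$ of the split graph is at least $\epsilon(G)-2(\text{number of splits})$, which is not directly enough. Instead, choose the pairing at $v_0$ so that each pair consists of edges of equal sign whenever possible. After switching at $v_0$ (which preserves minimality of $|E(G^-)|$ up to equivalence), at most one pair is mixed. Then the number of negative edges drops only by even amounts, and the odd count, which is at least $k$, stays odd.

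The main obstacle is reconciling this sign-controlled pairing with Mader's connectivity-preserving pairing. If that reconciliation fails, I would instead apply Lemma~\ref{lem:few}'s construction in Lemma~\ref{thm: extension} directly, with $v_0$ absorbed into the auxiliary vertex~$z$. Lifting a modulo $k$-orientation of $G'$ back to $G$ is immediate, since splitting preserves the existence of orientations. This contradicts $G$ being a counterexample, so $\epsilon(G)\ge k+1$.
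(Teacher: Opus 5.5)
\textbf{There is a genuine gap in the case $V_0=\{v_0\}$, $|E(G^-)|=k$.} Your case $V_0=\emptyset$ is fine. In the case $V_0=\{v_0\}$ with $|E(G^-)|$ even, your Mader-splitting argument also works: the count stays even, it does not increase, and the preserved local connectivity $6k-10\ge 3k-3$ suffices.

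The trouble is that Lemma~\ref{lem:Mader} gives you a specific pair of edges at $v_0$, with no control over signs. If it pairs two negative edges, the new edge is positive and the count drops to $k-2$. That graph has an odd number of negative edges, fewer than $k$, so it has no modulo $k$-orientation at all (by the counting $\sum_v\partial_D(v)=2(|E(G^-)|-2s)$ from the introduction). The argument therefore fails outright, not merely because Lemma~\ref{lem:few} stops applying.

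Your proposed repair goes in the wrong direction. Pairing edges of equal sign is exactly what destroys negative edges: each negative--negative pair removes two. So the count stays odd, but ``at least $k$'' is lost. What preserves $|E(G^-)|$ exactly is pairing each negative edge at $v_0$ with a distinct positive edge, since a negative edge times a positive edge gives a negative edge, and then pairing the leftover positive edges arbitrarily. This pairing exists because $|E(G^-)|=\epsilon(G)$ forces $d_{G^+}(v_0)\ge d_{G^-}(v_0)$. Your fallback of absorbing $v_0$ into $z$ is not worked out. There is no evident way to meet condition (ii) of Lemma~\ref{thm: extension} there, since $d(z)$ would pick up both $2|E(G^-)|$ and the edges at $v_0$.

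\textbf{The missing idea:} you only need $(3k-3)$-edge-connectivity after splitting, not $6k-10$. Dropping the stronger requirement is what frees you from Mader and lets you choose the sign-controlled pairing.
\begin{itemize}
\item If $G$ is already $(3k-3)$-edge-connected, Lemma~\ref{lem:few} applies to $G$ directly.
\item Otherwise some cut has size at most $3k-4<6k-10$. That cut must be $\delta_G(v_0)$, so $d_G(v_0)\le 3k-5$, because $d_G(v_0)$ is even and $3k-4$ is odd.
\item For every partition $A,B$ of $V(G)\setminus\{v_0\}$ into nonempty sets, $2e_G(A,B)=d_G(A)+d_G(B)-d_G(v_0)\ge 2(6k-10)-(3k-5)=9k-15\ge 6k-6$.
\item Hence splitting off all edges at $v_0$ in \emph{any} pairing, and deleting $v_0$, leaves a $(3k-3)$-edge-connected graph.
\item With the mixed-sign pairing above, that graph has exactly $|E(G^-)|$ negative edges, and Lemma~\ref{lem:few} finishes.
\end{itemize}
This is precisely the paper's argument.
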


\begin{proof}[Proof of Claim~\ref{cl: negativeness}.]
Suppose $\epsilon(G)\le k$.
Since $G$ is essentially $k$-unbalanced, either $\epsilon(G)$ is even
or $\epsilon(G)=k$.
Since $G$ has no modulo $k$-orientation, it has an edge cut $T$ with $|T|\leq 3k-4$ by Lemma~\ref{lem:few}.
By \eqref{eq: local conn}, $V_0=\{v_0\}$ and $v_0$ is isolated in $G-T$, and so $d_G(v_0)\le3k-5$ since the degree of $v_0$ is even.

Since $d_{G^+}(v_0)\ge d_{G^-}(v_0)$, we can pair each negative edge
 in $\delta_G(v_0)$ with a distinct positive edge in $\delta_G(v_0)$, and pair the remaining
positive edges arbitrarily.
Let $G_1$ be the signed graph obtained from $G$ by splitting all these pairs and deleting $v_0$. Then $|E(G_1^-)|=|E(G^-)|=\epsilon(G)$.
For every partition $A,B$ of $V(G)\setminus\{v_0\}$ into nonempty sets,
$$
2e_G(A,B) =d_G(A)+d_G(B)-d_G(v_0)\ge9k-15 \ge6k-6,
$$
which implies that $d_{G_1}(A)\ge e_G(A,B)\ge3k-3$. Thus $G_1$ is $(3k-3)$-edge-connected.
By Lemma~\ref{lem:few}, $G_1$ admits a modulo $k$-orientation and hence so does $G$, a contradiction.
\end{proof}

Since $k-1$ is even, every signed graph with negativeness
at least $k-1$ is essentially $k$-unbalanced.
By Claim~\ref{cl: negativeness} and Lemma~\ref{lem:splitting-negativeness}, every signed graph $G'$ obtained from $G$ by a single splitting remains essentially $k$-unbalanced. Thus,  no choice of $V_0'$ can make $G'$ satisfy all the hypotheses of Theorem~\ref{thm: strengthened}. Otherwise, by the minimality of $G$, $G'$ would admit a modulo $k$-orientation that lifts to $G$, a contradiction.

\begin{claim}\label{cl: regular}
$G$ is a loopless $(6k-9)$-regular signed graph of order at least $3$, and $V_0=\emptyset$.
\end{claim}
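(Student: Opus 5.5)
We argue by exploiting the minimality of $G$ together with the observation just made: no signed graph obtained from $G$ by one splitting can satisfy all the hypotheses of Theorem~\ref{thm: strengthened}, for any choice of $V_0'$. The plan is to show successively that $G$ is loopless, that $V_0=\emptyset$, that every vertex has degree $6k-9$, and that $|V(G)|\ge3$.

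\emph{Loops.} Since $G$ has no positive loop, any loop is negative and sits at some vertex $x\ne v_0$. We delete one negative loop $\ell$ at $x$, replace it by a suitable gadget, and apply minimality. A first idea is to split $\ell$ with an edge at $x$, but splitting a loop with itself is not allowed. Instead we use Lemma~\ref{lem:Mader}-type splittings at $x$ after subdividing $\ell$ (which introduces a vertex of degree $2$, a legal $V_0$ vertex). Concretely, we subdivide $\ell$ by a new vertex $v$ of degree $2$, with the two new edges signed so that their product is $-1$. This does not change modulo $k$-orientability, since a degree-$2$ vertex with boundary $0\pmod k$ forces consistency. If $V_0=\emptyset$, the new graph is smaller in edges after a splitting at $v$, or else we use $v$ as $v_0$. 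Rather than this, the cleaner route is direct: a negative loop contributes $\pm2$ to $\partial(x)$. Remove it and adjust, using minimality on $G-\ell$ with the edge count reduced, provided $\epsilon$ stays $\ge k-1$ (Claim~\ref{cl: negativeness} gives $\epsilon(G-\ell)\ge k$) and local connectivity is unaffected, since loops lie in no cut. A modulo $k$-orientation of $G-\ell$ then fails only by the loop's $\pm2$. So this route has to be combined with the splitting below, and we expect to handle loops inside the degree argument: a vertex with a loop can be split at with Lemma~\ref{lem:Mader} just like any other vertex.

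\emph{$V_0=\emptyset$.} Suppose $v_0$ exists. Its degree is even and positive. If $d(v_0)\ge4$, no edge at $v_0$ is a cut-edge because the graph is highly connected, so Lemma~\ref{lem:Mader} provides a splitting at $v_0$ preserving all $\lambda(x,y)$ for $x,y\ne v_0$. The resulting $G'$ keeps $v_0$ with even degree $d(v_0)-2$, and is essentially $k$-unbalanced, contradicting minimality. If $d(v_0)=2$, we split the two edges at $v_0$ and delete $v_0$, taking $V_0'=\emptyset$. Local connectivity among the other vertices is preserved, which again is a contradiction, provided at least two vertices remain, which holds by hypothesis.

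\emph{Degrees and order.} Now $V_0=\emptyset$. Take a vertex $x$ with $d(x)\ge6k-8$. If no such vertex exists, then every degree is at least $6k-10$ by connectivity, and the parity and regularity steps below apply instead. Lemma~\ref{lem:Mader} gives a splitting at $x$ preserving local connectivity among $V(G)\setminus\{x\}$. The vertex $x$ itself still has $\lambda\ge6k-10$ to the others unless some cut $\delta(A)$ with $x\in A$ drops below $6k-10$, which forces $d(A)\le6k-9$ originally. To avoid this we instead consider designating $x$ via degree control. The main obstacle is exactly this: ruling out degrees $6k-10$ and $\ge6k-8$, and tight cuts. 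For even degree $d(x)\le6k-12$ we could take $V_0'=\{x\}$, but degrees here are at least $6k-10$. We therefore plan to show the following. (a) If some $x$ has $d(x)\ge6k-8$, choose a vertex and splitting so that every cut containing only $x$-side changes stays $\ge6k-10$; tight cuts $\delta(A)$ with $|A|,|V\setminus A|\ge2$ of size $6k-10$ or $6k-9$ are handled by contracting $A$ via Lemma~\ref{le: contract} and gluing orientations of $G/A$ and $G/(V\setminus A)$, each smaller and still satisfying the hypotheses. (b) A vertex of degree $6k-10$ (even) can be split down to $6k-12$ and used as $V_0'$. This leaves all degrees equal to $6k-9$. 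Finally, $|V(G)|=2$ with an odd number $6k-9$ of parallel edges between the two vertices is handled directly or by splitting, giving order at least $3$.
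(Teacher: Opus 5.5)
Your treatment of $V_0$ is fine. Deleting $v_0$ in the degree-$2$ case, instead of keeping it isolated with $V_0'=V_0$, is a harmless variant. The other three parts of the claim, however, are not established.

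\emph{Loops.} You dismiss the right move. You never need to split a loop with itself: split the negative loop $\ell$ at $x$ with a non-loop edge $xy$, which exists since $G$ is connected and $|V(G)\setminus V_0|\ge2$. The result is just $G-\ell$ with the sign of $xy$ reversed. Then:
\begin{itemize}
\item all cuts are unchanged, so $V_0'=V_0$ still works;
\item $\epsilon$ drops by at most $2$, so the graph stays essentially $k$-unbalanced by Claim~\ref{cl: negativeness};
\item the lifted orientation absorbs exactly the $\pm2$ that you observed deletion alone leaves over.
\end{itemize}
Your fallback, ``handle loops inside the degree argument'', does not work. The splittings from Lemma~\ref{lem:Mader} use edges of $\delta_G(x)$ and leave loops untouched. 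Moreover, a vertex of degree $6k-9$ carrying a negative loop is never split at all.

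\emph{Degrees.} The obstacle you identify is illusory. After a Mader splitting at $x$, take any $A$ with $x\in A$, $y\notin A$ and $|A|\ge2$. It contains some $x'\ne x$, so $d_{G'}(A)\ge\lambda_{G'}(x',y)=\lambda_G(x',y)\ge6k-10$. For $A=\{x\}$ you get $d_{G'}(x)=d_G(x)-2\ge6k-10$ when $d_G(x)\ge6k-8$. Your proposed remedy, gluing modulo $k$-orientations of $G/A$ and $G/(V\setminus A)$, is not valid. The two orientations are chosen independently and generally disagree on $\delta(A)$, so the boundaries at the ends of cut edges are uncontrolled. Reconciling an orientation of $G/A$ with the inside of $A$ is exactly what the whole extension argument of Section~\ref{sec: proof} is for.

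\emph{Order.} Order must be settled before the degree step. Taking $V_0'=\{x\}$ when $d(x)=6k-10$ requires $|V(G)\setminus\{x\}|\ge2$. When $|V(G)|=2$ your step (b) is unavailable, so your last sentence cannot assume $6k-9$ parallel edges, and ``handled directly or by splitting'' is not an argument. You need an explicit construction for an arbitrary number of edges, such as the paper's. With $V(G)=\{u,v\}$ and $|E(G^+)|\ge|E(G^-)|=\epsilon(G)\ge k+1$, choose $p,q\in[0,k-1]$ with $|E(G^+)|-2p\equiv|E(G^-)|-2q\equiv0\pmod k$. Direct every half-edge at $u$ away from $u$ except those of $p$ positive and $q$ negative edges. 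Then $\partial(u)\equiv\partial(v)\equiv0\pmod k$.
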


\begin{proof}[Proof of Claim~\ref{cl: regular}.]
Suppose that $G$ has a negative loop at a vertex $v$.
Then $v\notin V_0$.
Splitting this loop and an edge in $\delta_G(v)$ yields a signed graph with $V_0'=V_0$ that   satisfies all the remaining hypotheses, a contradiction.
Thus $G$ is loopless.

We first show that $|V(G)|\ge 3$. Suppose $V(G)=\{u,v\}$. Then $V_0=\emptyset$.
Since $|E(G^+)|\ge |E(G^-)|=\epsilon(G)\ge k+1$ and $k$ is odd, there exist $p,q\in[0,k-1]$ such that
$$
|E(G^+)|-2p\equiv |E(G^-)|-2q\equiv0\pmod{k}.
$$
Choose $p$ positive edges and $q$ negative edges. Let $D$ be the orientation of $G$ in which the half-edges at $u$ of the chosen edges are directed toward $u$, and all other half-edges at $u$ are directed away from $u$. 
Then
$$
\begin{aligned}
\partial_D(u)&=(|E(G^+)|-2p)+(|E(G^-)|-2q)\equiv0\pmod{k},\\
\partial_D(v)&=-(|E(G^+)|-2p)+(|E(G^-)|-2q)\equiv0\pmod{k}.
\end{aligned}
$$
Hence $D$ is a modulo $k$-orientation of $G$, a contradiction. Therefore, $|V(G)|\ge3$.

We next show that $V_0=\emptyset$. Suppose that $V_0=\{v_0\}$.
Since $G$ is connected and $d_G(v_0)$ is even, we have $d_G(v_0)\ge2$ and no edge in $\delta_G(v_0)$ is a cut-edge by \eqref{eq: local conn}. If $d_G(v_0)=2$, splitting its two incident edges preserves all local edge connectivities between the other vertices.
If $d_G(v_0)\ge4$, then by Lemma~\ref{lem:Mader}, there are two edges in $\delta_G(v_0)$ whose splitting preserves all local edge connectivities between the other vertices.
In either case, the resulting graph with $V_0'=V_0$ satisfies all the remaining hypotheses, a contradiction.
Hence $V_0=\emptyset$.

It remains to show that $G$ is $(6k-9)$-regular.
Suppose that $d_G(v)\ne6k-9$ for some vertex $v$. Since $V_0=\emptyset$, $G$ is $(6k-10)$-edge-connected, and so $d_G(v)=6k-10$ or $d_G(v)\ge 6k-8$.
By Lemma~\ref{lem:Mader}, we can split two edges in $\delta_G(v)$
such that the resulting signed graph $G'$ preserves all local edge connectivities between
the other vertices.
If $d_G(v)=6k-10$, then $d_{G'}(v)=6k-12$, and we take $V_0'=\{v\}$.
If $d_G(v)\ge6k-8$, then $d_{G'}(v)\ge6k-10$, and we take $V_0'=\emptyset$.
In both cases, $G'$ admits a modulo $k$-orientation by the minimality of $G$, and hence so does $G$, a contradiction.
Therefore $G$ is $(6k-9)$-regular.
\end{proof}

Since $d_{G^+}(B)\ge d_{G^-}(B)$ for every $B\subseteq V(G)$, \eqref{eq: local conn} and Claim~\ref{cl: regular} yield that 
\begin{equation}\label{eq: degree bounds}
\begin{aligned}
d_{G^+}(B)  \ge3k-5 \quad \forall B\subsetneq V(G) \mbox{ with $|B|>0$},\\
d_{G^-}(v) \le3k-5<3k-4 \leq d_{G^+}(v) \quad \forall v\in V(G).
\end{aligned}
\end{equation}

\begin{claim}\label{cl: multiplicities}
$G^-$ has multiplicity at most $k-2$.
\end{claim}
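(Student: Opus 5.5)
Assume, for a contradiction, that some pair $u,v$ is joined by $m\ge k-1$ negative edges. The first idea is to replace these parallel negative edges by a smaller gadget and appeal to the minimality of $G$. This uses one observation. Let $M$ be a set of $k-1$ of these negative $uv$-edges, and orient $j$ of them as sink edges and $k-1-j$ as source edges. Each of $u$ and $v$ then receives the same contribution $k-1-2j$. As $j$ runs over $[0,k-1]$, this value runs over $k$ consecutive integers of equal parity, so it covers every residue modulo $k$. Hence $M$ can simulate \emph{any} common contribution $x\in\Z_k$ at $u$ and $v$. In particular it can simulate $0$, or a single negative $uv$-edge with contribution $\pm1$.

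Deleting $M$, or replacing it by one negative edge, fails directly, because cuts separating $u$ from $v$ lose about $k-1$ edges and \eqref{eq: local conn} breaks. So I would use the contraction $G'=G/\{u,v\}$ instead. The steps, in order:
\begin{itemize}
\item[(a)] $|V(G')|\ge2$, since $|V(G)|\ge3$ by Claim~\ref{cl: regular}.
\item[(b)] $\epsilon(G')=\epsilon(G)\ge k+1$ by Lemma~\ref{le: contract} (we assumed $|E(G^-)|=\epsilon(G)$) and Claim~\ref{cl: negativeness}; since $k-1$ is even, $G'$ is essentially $k$-unbalanced.
\item[(c)] Local edge connectivities between vertices of $G'$ other than the new vertex $z$ do not decrease. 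Every cut of $G'$ is a cut of $G$ not separating $u$ from $v$, so $\lambda_{G'}\ge6k-10$ for all pairs with $V_0'=\emptyset$. Note that $z$ may carry negative loops, which the theorem allows.
\end{itemize}
By minimality, $G'$ then has a modulo $k$-orientation $D'$.

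It remains to lift $D'$. Keep the orientation of every edge not joining $u$ and $v$, and let $a$ and $c$ be the resulting partial boundaries at $u$ and $v$. All $uv$-edges are reoriented freely. Say there are $p$ positive and $m$ negative $uv$-edges. They add $y+x$ at $u$ and $x-y$ at $v$, where $x=m-2j$ with $j\in[0,m]$, and $y=p-2i$ with $i\in[0,p]$. We need $a+x+y\equiv0$ and $c+x-y\equiv0\pmod k$. Since $m\ge k-1$, the value $x$ can be any residue. The sum condition is consistent because $\partial_{D'}(z)\equiv0$. The difference condition, however, requires $2y\equiv c-a$.

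This difference condition is the main obstacle: with few positive $uv$-edges, $y$ has too little freedom. To overcome it, I would first move some of the freedom from positive to negative edges. For this I would split a negative $uv$-edge with a positive edge $ux$ at $u$, which creates a negative edge $vx$; Lemma~\ref{lem:splitting-negativeness} controls the loss of negativeness. Alternatively, I would exploit the fact that $d_{G^+}(u)\ge3k-4$ from \eqref{eq: degree bounds}. Concretely, I would contract $\{u,v\}$ while leaving a bounded number of positive edges at $u$ un-contracted, namely by splitting them off to a vertex placed in $V_0'$ with even degree at most $6k-12$. The freedom needed to fix $a-c$ then comes from those edges. Checking that this modified graph still satisfies \eqref{eq: local conn} is the delicate computation. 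It would use regularity, $d_G(\{u,v\})=12k-18-2e_G(u,v)\ge6k-10$, and the multiplicity assumption.
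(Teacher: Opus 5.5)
Your reduction to $G'=G/\{u,v\}$ and your analysis of the lift are correct, and you locate the obstacle exactly. Without switching, the $m\ge k-1$ negative $uv$-edges survive as negative loops at $z$. When lifted, each contributes the same $\pm1$ to $u$ and to $v$, so together they can only adjust the sum $\partial(u)+\partial(v)$, which $\partial_{D'}(z)\equiv0$ already controls. The difference $a-c+2y$ can be corrected only by positive $uv$-edges, and there may be too few of them.

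Neither of your remedies is carried out, so the proof has a gap at the decisive step. The first remedy cannot be combined with the minimality of $G$ on its own. A single splitting at $u$ leaves $d(u)=6k-11$, which is odd and below $6k-10$. Then \eqref{eq: local conn} fails at $u$, and $u$ cannot be placed in $V_0'$ because $V_0'$ requires even degree. The paper notes explicitly that no single splitting of $G$ satisfies the hypotheses. The second remedy is too vague to verify.

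The missing idea is to switch at $u$ before contracting, which swaps the roles of sum and difference.

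\begin{itemize}
\item[(1)] Let $F$ be a set of $k-1$ negative $uv$-edges, and let $G_1$ be obtained from $G$ by switching at $u$. Then $F\subseteq E(G_1^+)$.
\item[(2)] In $G_1/\{u,v\}$, the edges of $F$ and the other originally negative $uv$-edges are deleted as positive loops. The originally positive $uv$-edges become negative loops at $z$.
\item[(3)] Your connectivity argument (c) applies unchanged. The first part of Lemma~\ref{le: contract} gives $\epsilon(G_1/\{u,v\})\ge\epsilon(G_1)=\epsilon(G)\ge k-1$, so the contracted graph is essentially $k$-unbalanced. The equality part no longer applies, since $|E(G_1^-)|$ need not equal $\epsilon(G_1)$, but the inequality suffices.
\item[(4)] By minimality, $G_1/\{u,v\}$ has a modulo $k$-orientation. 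Lift it, and orient the positive $uv$-edges of $G_1$ outside $F$ arbitrarily. This gives $\partial(u)+\partial(v)\equiv0\pmod{k}$.
\item[(5)] Orient $a$ edges of $F$ from $u$ to $v$ and $b=k-1-a$ edges from $v$ to $u$. Since they are positive, they contribute $a-b$ at $u$ and $b-a$ at $v$. By your own parity observation, $a-b$ runs over every residue modulo $k$, so it can fix the difference.
\item[(6)] This yields a modulo $k$-orientation of $G_1$. Switching preserves the existence of modulo $k$-orientations, so $G$ has one, a contradiction.
\end{itemize}
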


\begin{proof}[Proof of Claim~\ref{cl: multiplicities}.]
Suppose that $e_{G^-}(u,v)\ge k-1$ for some distinct $u,v\in V(G)$. 
Choose a set $F$ of $k-1$ negative edges joining $u$ and $v$, and let $G_1$ be the signed graph obtained from $G$ by switching at $u$.
Then $F\subseteq E(G_1^+)$, and $G_1/\{u,v\}$ has at least two vertices and remains
$(6k-10)$-edge-connected.
By Lemma~\ref{le: contract}, $\epsilon(G_1/\{u,v\})\ge\epsilon(G_1)=\epsilon(G)\ge k-1$.
Thus $G_1/\{u,v\}$ is essentially $k$-unbalanced, and so admits a modulo $k$-orientation $D_1$ by
the minimality of $G$.

Let $D_2$ be the orientation of $G_1-F$ obtained by preserving the half-edge directions inherited from $D_1$ and orienting the positive edges between $u$ and $v$ outside $F$ arbitrarily.
Then $\partial_{D_2}(u)+\partial_{D_2}(v)\equiv0\pmod{k}$.
Since $k$ is odd, we can choose $a,b\in[0,k-1]$ such that
$$
a+b=k-1,
\qquad
a-b\equiv-\partial_{D_2}(u)\pmod{k}.
$$
Extend $D_2$ by orienting $a$ edges of $F$ from $u$ to $v$ and the remaining $b$ edges from $v$ to $u$. The resulting orientation is a modulo $k$-orientation of $G_1$.
Since $G$ and $G_1$ are equivalent, $G$ also admits a modulo $k$-orientation,
a contradiction.
\end{proof}

\begin{claim}\label{cl: small cut}
There is a proper subset $A\subset V(G)$ such that
\begin{equation}\label{eq: minimal cut}
\begin{aligned}
|A|\ge2,\qquad
d_{G^+}(A) & \in\{3k-5,3k-4\},\qquad d_{G^-}(A)\le3k-4,\\
d_{G^+}(B) &\ge 3k-3 \quad \forall B\subsetneq A \mbox{ with } |B|\ge 2.
\end{aligned}
\end{equation}
Moreover, $G/A$ admits a modulo $k$-orientation.
\end{claim}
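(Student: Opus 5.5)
We first produce a small positive cut and then shrink it to a minimal one. The natural source is the failure of the ordinary-graph theory on $G^+$. Consider $G^+$, a spanning subgraph of $G$. If every nonempty proper $B$ had $d_{G^+}(B)\ge 3k-3$, then $G^+$ would be $(3k-3)$-edge-connected.

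In that case we would build a modulo $k$-orientation of $G$ directly. First orient each negative edge $uv$ arbitrarily, say as a source edge, and record its contribution to $\partial(u)$ and $\partial(v)$. This defines a $\Z_k$-boundary $\beta(v)=-(\text{contribution of negative half-edges at }v)$. The sum of $\beta$ is $\equiv 0$ exactly when $2|E(G^-)|-4s\equiv0$ for the number $s$ of sink edges. Since $k$ is odd we can choose $s$ to make this hold, provided $|E(G^-)|\ge k$; this holds because $\epsilon(G)\ge k+1$ by Claim~\ref{cl: negativeness}.

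We then apply Lemma~\ref{thm: extension} to $G^+$. Use a dummy vertex $z$ of degree $0$, or split off one vertex, so that hypothesis (i) holds. Hypothesis (iii) follows from \eqref{eq: parity bound}, since every cut of $G^+$ is at least $3k-3$. The resulting $\beta$-orientation of $G^+$, together with the fixed negative orientation, is a modulo $k$-orientation of $G$, a contradiction. Hence some nonempty proper $B$ has $d_{G^+}(B)\le 3k-4$, and by \eqref{eq: degree bounds} $d_{G^+}(B)\in\{3k-5,3k-4\}$.

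Next we normalize $B$. Singletons have $d_{G^+}(v)\ge 3k-4$, so a singleton $B$ would need $d_{G^+}(v)=3k-4$; then $d_{G^-}(v)=6k-9-(3k-4)=3k-5$, which is a priori allowed. To get $|A|\ge2$ we use the complement $V(G)\setminus B$ where necessary, which has the same cut; here $|V(G)|\ge3$ by Claim~\ref{cl: regular}. If both $B$ and its complement were singletons we would have $|V(G)|=2$, which is excluded. We also need $d_{G^-}(A)\le 3k-4$. This follows from $d_{G^-}(A)\le d_{G^+}(A)$, which holds because $|E(G^-)|=\epsilon(G)$.

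Now choose $A$ inclusion-minimal among sets with $|A|\ge2$ and $d_{G^+}(A)\le 3k-4$. Any $B\subsetneq A$ with $|B|\ge2$ then has $d_{G^+}(B)\ge 3k-3$, giving \eqref{eq: minimal cut}.

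For the final assertion, consider $G/A$. Its vertex set has $|V(G)\setminus A|+1\ge2$ vertices. Local connectivities between vertices other than the contracted vertex $a$ are still at least $6k-10$. The degree of $a$ is $d_G(A)=d_{G^+}(A)+d_{G^-}(A)$, and it is at most $6k-8$. We want to apply the minimality of $G$ to $G/A$, which is smaller since $|A|\ge2$.

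\begin{itemize}
\item[] \emph{Negativeness.} By Lemma~\ref{le: contract} and $|E(G^-)|=\epsilon(G)$, we get $\epsilon(G/A)=\epsilon(G)\ge k+1$, so $G/A$ is essentially $k$-unbalanced.
\item[] \emph{Loops at $a$.} Contraction may create negative loops at $a$. These loops contribute to $\epsilon$, so they must be kept in the count.
\end{itemize}

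The main obstacle is fitting $a$ into Theorem~\ref{thm: strengthened}, which requires $v_0$ to be loopless with even degree at most $6k-12$. If $d_G(A)\ge 6k-10$, we take $V_0'=\emptyset$. The cuts containing $a$ are cuts of $G$, so they are at least $6k-10$, and minimality applies directly.

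Otherwise $d_G(A)\le 6k-11$, and we need a vertex with even degree. If $d_G(A)$ is odd, we first split off a pair at $a$, or remove a negative loop by splitting it with an edge at $a$ as in the proof of Claim~\ref{cl: regular}. We then check that the degree becomes even and at most $6k-12$, that the vertex becomes loopless, and that the other connectivities and the negativeness $\ge k-1$ are preserved via Lemma~\ref{lem:splitting-negativeness}. Negative loops at $a$ are removed by repeated such splittings.

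I expect this parity and loop bookkeeping at the contracted vertex to be the delicate step.
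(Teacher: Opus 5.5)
Your construction of $A$ is the paper's argument. If $G^+$ were $(3k-3)$-edge-connected, you orient $G^-$ with a suitable number of sink edges so the induced boundary sums to $0$, attach an isolated root, and apply Lemma~\ref{thm: extension} via \eqref{eq: parity bound}. Otherwise you pass to the complement if needed, using $|V(G)|\ge3$, and take a minimal such set, with $d_{G^-}(A)\le d_{G^+}(A)$ coming from $|E(G^-)|=\epsilon(G)$. Choosing $A$ inclusion-minimal instead of minimum-cardinality is fine, as long as you restrict to proper subsets.

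The problem is the final assertion, where you treat a nonexistent case as the main obstacle. By Claim~\ref{cl: regular}, $V_0=\emptyset$, so \eqref{eq: local conn} says $G$ is $(6k-10)$-edge-connected. Every edge cut of $G/A$, including $\delta_{G/A}(a)=\delta_G(A)$, is an edge cut of $G$. Your own remark that ``the cuts containing $a$ are cuts of $G$,'' applied to $\{a\}$, already gives $d_G(A)\ge 6k-10$. Hence $G/A$ is $(6k-10)$-edge-connected, and minimality applies with $V_0'=\emptyset$. The looplessness and even-degree conditions in Theorem~\ref{thm: strengthened} concern only $v_0$, so negative loops at $a$ are irrelevant. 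Your first case is therefore the only case, and it is exactly the paper's proof.

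You should drop the case $d_G(A)\le 6k-11$ rather than try to complete it. The remedy you sketch there could not work anyway. Splitting two edges at $a$, or a loop and an edge at $a$, lowers $d(a)$ by exactly $2$, so it can never turn an odd degree into an even one.
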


\begin{proof}[Proof of Claim~\ref{cl: small cut}.]
Suppose first that $G^+$ is $(3k-3)$-edge-connected.
Since $k$ is odd, there exists $p\in [0, k-1]$ such that $2|E(G^-)|-4p\equiv0\pmod{k}$.
Since $|E(G^-)|\ge k-1$, we may choose an orientation $D_0$ of $G^-$ in which exactly $p$ negative edges  are source edges and all remaining edges are sink edges.
For each $v\in V(G)$, define $\beta(v)\in\Z_k$ by $\beta(v)\equiv-\partial_{D_0}(v)\pmod{k}$. Then $\sum_{v\in V(G)}\beta(v)=4p-2|E(G^-)|\equiv0\pmod{k}$.
Add an isolated vertex $z_0$ to $G^+$ and set $\beta(z_0)=0$.
By \eqref{eq: parity bound}, the resulting graph satisfies
the hypotheses of Lemma~\ref{thm: extension}, and hence $G^+$ admits a $\beta$-orientation.
Together with $D_0$, this yields a modulo $k$-orientation of $G$,
a contradiction.

Since $|V(G)|\ge3$, there exists a proper subset $A\subset V(G)$ with $|A|\ge2$ and $d_{G^+}(A)\le3k-4$.  
Choose such a set $A$ of minimum cardinality.
By \eqref{eq: degree bounds},
$$3k-5\le d_{G^+}(A)\le3k-4, \qquad d_{G^-}(A)\le d_{G^+}(A)\le3k-4.$$
By the minimality of $A$, we have $d_{G^+}(B)\ge3k-3$ for every
proper subset $B\subset A$ with $|B|\ge2$.
Thus, \eqref{eq: minimal cut} holds.

Clearly, $G/A$ has at least two vertices and remains $(6k-10)$-edge-connected.
By Lemma~\ref{le: contract}, we have $\epsilon(G/A)=\epsilon(G)\ge k-1$, and so $G/A$ is essentially $k$-unbalanced.
Hence $G/A$ admits a modulo $k$-orientation by the minimality of $G$.
\end{proof}

Fix a set $A$ as in Claim~\ref{cl: small cut} and a modulo
$k$-orientation $D_{G/A}$ of $G/A$.
Let $D_A$ be the orientation of $G-E(G^+[A])$ obtained by
inheriting the half-edge directions from $D_{G/A}$.
Then
\begin{equation}\label{eq: inherited boundary}
\sum_{v\in A}\partial_{D_A}(v)\equiv0\pmod{k}.
\end{equation}

Let $C=A\cap V_{3k-4}(G^+)$. By Claim~\ref{cl: regular}, we have $C=A\cap V_{3k-5}(G^-)$.
For each $v\in C$, put $b_v=e_{G^-}(v,V(G)\setminus A)$.
Define
$$
U=
\begin{cases}
C, & \text{if } |C|\le2,\\
\{u,w\}, & \text{if } |C|\ge3,
\end{cases}
$$
where $u$ and $w$ are distinct vertices of $C$ for which $b_u$ and $b_w$ are minimum and maximum, respectively.
Thus, $U\subseteq C$ and $|U|\le2$.

To extend $D_A$ to a modulo $k$-orientation of $G$, we construct
an auxiliary ordinary graph as follows:
$$
H=\left(G^+/(V(G)\setminus A)\right) \cup E_0,
$$
where $E_0$ is a set of edges joining $A$ to the contracted vertex, denoted by $z$, such that $|E_0|=3k-2-d_{G^+}(A)$ and every vertex of $U$ is incident with at least one edge in $E_0$. An illustration of $H$ is shown in Fig.~\ref{fig: auxiliary graph}.
\begin{figure}[!t]
\centering
\scriptsize
\begin{tikzpicture}[scale=0.8]
\foreach \x/\p in {0/L,9.4/R} {
  \begin{scope}[xshift=\x cm]
    \draw[black!45,line width=0.5pt]
      (0,1.5) ellipse (0.72cm and 0.85cm);
    \draw[black!45,line width=0.5pt]
      (0,-1.15) ellipse (0.72cm and 1.2cm);
    \draw[black!65,fill=black!5,line width=0.5pt]
      (-0.08,-1.57) ellipse (0.45cm and 0.66cm);

    \path (0,1.95) coordinate (\p a1);
    \path (0,1.10) coordinate (\p a2);
    \path (0,-0.55) coordinate (\p c1);
    \path (0.06,-1.30) coordinate (\p u1);
    \path (0.06,-1.85) coordinate (\p u2);

    \node[left] at (-0.78,1.5) {$A\setminus C$};
    \node[left] at (-0.78,-1.15) {$C$};
    \node at (-0.32,-1.57) {$U$};
    \node at (-0.01,1.65) {$\vdots$};
  \end{scope}
}

\draw[black!45,line width=0.5pt]
  (3.7,0) ellipse (0.65cm and 2.15cm);
\foreach \i/\y in {1/1.35,2/0.65,3/-0.10,4/-0.90}
  \path (3.55,\y) coordinate (x\i);

\foreach \v/\i in {a1/1,a2/2,c1/3,u1/4}
  \draw[line width=0.55pt] (L\v) -- (x\i);
\foreach \i in {1,...,4}
  \draw[fill=black] (x\i) circle (0.08cm);
\node at (3.55,-1.35) {$\vdots$};
\node[below] at (3.7,-2.22) {$V(G)\setminus A$};
\node at (1.6,-2.9) {$G^+$};

\draw[->,>=stealth,line width=0.7pt]
  (5.5,0) -- (6.8,0);

\path (13.1,0) coordinate (z);
\foreach \v in {a1,a2,c1,u1}
  \draw[line width=0.55pt] (R\v) -- (z);

\draw[line width=1.4pt]
  (Ru1) to[bend right=8] (z);
\draw[line width=1.4pt]
  (Ru2) to[bend right=15]
  node[midway,below=5pt] {$E_0$} (z);

\foreach \p in {L,R}
  \foreach \v in {a1,a2,c1,u1,u2}
    \draw[fill=black] (\p\v) circle (0.08cm);
\draw[fill=black] (z) circle (0.08cm);
\node[right] at (z) {$z$};
\node at (11.25,-2.9) {$H$};
\end{tikzpicture}
\caption{Illustration of the construction of $H$, with the edges in $E_0$ drawn thick.}
\label{fig: auxiliary graph}
\end{figure}
 Clearly, $|E_0|\in [2,3]$, $d_H(z)=d_{G^+}(A)+|E_0|=3k-2$ and $d_H(v)\ge d_{G^+}(v)+1=3k-3$ for $v\in U$.

Let $D_z$ be a pre-orientation of $\delta_H(z)$ in which $k-1$ edges are directed away from $z$ and the remaining $2k-1$ edges are directed toward $z$. Then
\begin{equation}\label{eq: root boundary}
\partial_{D_z}(z)=-k,
\qquad
\sum_{v\in A}\partial_{D_z}(v)=k.
\end{equation}

By \eqref{eq: degree bounds} and the construction of $H$,
every $v\in A$ with $d_H(v)\le3k-4$ belongs to
$C\setminus U$.
To apply Lemma~\ref{thm: extension}, we need the prescribed boundary to be
nonzero at these vertices. We therefore adjust $D_A$ as follows.

\begin{claim}\label{cl: partial orientation}
There exists an orientation $D_A'$ of $G-E(G^+[A])$ that
agrees with $D_A$ outside $G^-[A]$ and satisfies
\begin{equation}\label{eq: adjusted boundary}
\sum_{v\in A}\partial_{D_A'}(v)\equiv0\pmod{k}
\end{equation}
and
\begin{equation}\label{eq: boundary avoidance}
\partial_{D_A'}(v)\not\equiv\partial_{D_z}(v)\pmod{k}
\quad \forall v\in C\setminus U.
\end{equation}
\end{claim}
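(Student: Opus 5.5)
The plan is to modify $D_A$ only by re-choosing which edges of $G^-[A]$ are source edges and which are sink edges, and to make that choice with Lemma~\ref{lem: star} applied to the ordinary graph underlying $G^-$. We keep the orientation of every edge of $G-E(G^+[A])$ outside $G^-[A]$, including all negative edges of $\delta_G(A)$. For $v\in A$, let $c_v$ be the contribution to $\partial_{D_A}(v)$ from half-edges at $v$ not in $G^-[A]$; this quantity is fixed. If $H$ is the spanning subgraph of $G^-[A]$ formed by the source edges (with all other edges of $G^-[A]$ sink edges), then, since $G$ is loopless by Claim~\ref{cl: regular},
$$
\partial_{D_A'}(v)=c_v-d_{G^-[A]}(v)+2d_H(v),\qquad \sum_{v\in A}\partial_{D_A'}(v)=\sum_{v\in A}\bigl(c_v-d_{G^-[A]}(v)\bigr)+4|E(H)|.
$$
Let $H_0$ be the set of source edges of $G^-[A]$ in $D_A$. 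By \eqref{eq: inherited boundary}, condition \eqref{eq: adjusted boundary} holds exactly when $4|E(H)|\equiv 4|E(H_0)|\pmod k$. Since $k$ is odd, this is equivalent to $|E(H)|\equiv s:=|E(H_0)|\pmod k$. Similarly, \eqref{eq: boundary avoidance} at $v$ becomes $d_H(v)\not\equiv f_v\pmod k$, where $f_v$ is the unique residue with $2f_v\equiv \partial_{D_z}(v)-c_v+d_{G^-[A]}(v)$. So the claim reduces to finding a spanning subgraph $H$ of $G^-[A]$ with $|E(H)|\equiv s$ and $d_H(v)\not\equiv f_v$ for all $v\in C\setminus U$.

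If $|C|\le 2$, then $U=C$ and $C\setminus U=\emptyset$, so we simply take $H=H_0$, that is, $D_A'=D_A$. If $|C|\ge3$, we apply Lemma~\ref{lem: star} to the loopless graph $G^-$ (forgetting signs) with the same set $A$. We check its hypotheses as follows.
\begin{itemize}
\item By \eqref{eq: degree bounds}, the maximum degree of $G^-$ is at most $3k-5$. It equals $3k-5$ because $C\neq\emptyset$, using $C=A\cap V_{3k-5}(G^-)$ from Claim~\ref{cl: regular}. Hence the set called $C$ in Lemma~\ref{lem: star} coincides with our $C$.
\item The multiplicity of $G^-$ is at most $k-2$ by Claim~\ref{cl: multiplicities}.
\item $d_{G^-}(A)\le 3k-4$ by \eqref{eq: minimal cut}.
\item The numbers $b_v=e_{G^-}(v,V(G)\setminus A)$ agree with those in the lemma, and $u,w$ were chosen to minimize and maximize $b_v$ over $C$, exactly as the lemma requires.
\end{itemize}
Lemma~\ref{lem: star} then gives a spanning subgraph $H$ of $G^-[A]$ with $|E(H)|\equiv s$ and $d_H(v)\not\equiv f_v$ for all $v\in C\setminus\{u,w\}=C\setminus U$. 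Orienting the edges of $H$ as source edges and the remaining edges of $G^-[A]$ as sink edges gives $D_A'$.

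I expect no serious obstacle, since the combinatorial difficulty is already contained in Lemma~\ref{lem: star}. The step needing the most care is the bookkeeping: each negative edge inside $A$ contributes $\pm1$ at each end, so switching it between sink and source shifts the boundary by $2$ at each endpoint and by $4$ in total. Both translations above (the sum condition into $|E(H)|\equiv s$, and the avoidance condition into $d_H(v)\not\equiv f_v$) use only that $2$ and $4$ are invertible modulo the odd integer $k$. One must also confirm that the $\Delta$, $C$ and $b_v$ of Lemma~\ref{lem: star} match the objects defined in the main proof, as done above.
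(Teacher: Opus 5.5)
Your proposal is correct and is essentially the paper's proof. You keep $D_A$ outside $G^-[A]$, reduce \eqref{eq: adjusted boundary} to $|E(H)|\equiv s\pmod k$ and \eqref{eq: boundary avoidance} to $d_H(v)\not\equiv f_v\pmod k$ by inverting $2$ and $4$ modulo $k$, and apply Lemma~\ref{lem: star} to $G^-$ with the same $A,u,w$; the only cosmetic difference is that the paper lets the lemma's subgraph be the sink edges rather than the source edges.

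One bookkeeping slip needs fixing. In this paper a sink edge has both half-edges directed \emph{away} from its ends, so a source edge contributes $-1$ at each end. The correct formula is therefore $\partial_{D_A'}(v)=c_v+d_{G^-[A]}(v)-2d_H(v)$, and $f_v$ must be defined by $2f_v\equiv c_v+d_{G^-[A]}(v)-\partial_{D_z}(v)$. Since Lemma~\ref{lem: star} accepts arbitrary residues $f_v$, nothing else in your argument changes.
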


\begin{proof}[Proof of Claim~\ref{cl: partial orientation}.]
If $|C|\le2$, then $U=C$, and we may take $D_A'=D_A$.
Thus assume that $|C|\ge3$.
Let $s$ be the number of sink edges in $G^-[A]$ under $D_A$.
For each $v\in C\setminus U$, let $s_v$ be the number of
sink edges of $G^-[A]$ incident with $v$ under $D_A$, and define $f_v\in \Z_k$ by 
$$
f_v\equiv s_v+\frac{1}{2}(k+1)\bigl(\partial_{D_z}(v)
-\partial_{D_A}(v)\bigr)\pmod{k}.
$$
By applying Lemma~\ref{lem: star} to $G^-$ and $A$, with the vertices $u,w$ chosen above, we obtain a spanning subgraph
$F$ of $G^-[A]$ satisfying
$$
|E(F)|\equiv s\pmod{k},
\qquad
d_F(v)\not\equiv f_v\pmod{k}
\quad \forall v\in C\setminus U.
$$

Let $D_A'$ be the orientation obtained from $D_A$ by
orienting every edge of $F$ as a sink edge and every
remaining edge of $G^-[A]$ as a source edge. For each $v\in C\setminus U$, we have
$$
\begin{aligned}
\partial_{D_A'}(v)-\partial_{D_z}(v)
&=\partial_{D_A}(v)-2s_v+2d_F(v)-\partial_{D_z}(v)\\
&\equiv2\bigl(d_F(v)-f_v\bigr)\not\equiv0\pmod{k},
\end{aligned}
$$
which proves \eqref{eq: boundary avoidance}. Since the number of sink edges in $G^-[A]$ is $s$ under $D_A$ and
$|E(F)|$ under $D_A'$, by \eqref{eq: inherited boundary}, we have 
$$
\sum_{v\in A}\partial_{D_A'}(v)=\sum_{v\in A}\partial_{D_A}(v)+4\bigl(|E(F)|-s\bigr)
\equiv0\pmod{k}.
$$
This proves \eqref{eq: adjusted boundary} and completes the proof
of the claim.
\end{proof}

\noindent {\bf The final step.}  Fix an orientation $D_A'$ given by Claim~\ref{cl: partial orientation} and define
$\beta:V(H)\to\Z_k$ by
\begin{equation}\label{eq: required boundary}
\beta(z)=0,
\qquad
\beta(v)=\partial_{D_z}(v)-\partial_{D_A'}(v)
\quad \forall v\in A.
\end{equation}
By \eqref{eq: root boundary} and \eqref{eq: adjusted boundary},
$\beta$ is a $\Z_k$-boundary.
Moreover, by \eqref{eq: boundary avoidance}, $\beta(v)\ne0$ for every $v\in C\setminus U$.

We now verify that $H$ together with $D_z$ and $\beta$  satisfies the hypotheses
of Lemma~\ref{thm: extension}.
Clearly, $|V(H)|=|A|+1\ge3$ and
$\partial_{D_z}(z)=-k\equiv\beta(z)\pmod{k}$.
Since $d_H(z)=3k-2$ is odd and $\beta(z)=0$,
we have $|\tau(z)|=k$, and hence
$$
d_H(z)=2k-2+|\tau(z)|.
$$
Now let $B$ be a nonempty proper subset of $A$.
If $d_H(B)\ge3k-3$, then by \eqref{eq: parity bound}, 
$$d_H(B)\ge2k-2+|\tau(B)|.$$ 
Suppose now that $d_H(B)\le3k-4$. By \eqref{eq: minimal cut} and the construction of $H$, we must have $B=\{v\}$ for some $v\in A$. Moreover, $v\in C$ since $
3k-4\le d_{G^+}(v)\le d_H(v)\le3k-4$ by \eqref{eq: degree bounds},
Note that every vertex of $U$ is incident with at least one edge of $E_0$ and therefore has degree at least $3k-3$ in $H$.
Thus $v\in C\setminus U$, and by \eqref{eq: boundary avoidance},
$
\beta(v)=\partial_{D_z}(v)-\partial_{D_A'}(v)
\not\equiv0\pmod{k}.
$
Since $d_H(v)=3k-4$ is odd, it follows that
$|\tau(v)|\le k-2$. Hence
$$
d_H(v)=3k-4\ge2k-2+|\tau(v)|.
$$

By Lemma~\ref{thm: extension}, $D_z$ extends to a
$\beta$-orientation of $H$.
Together with $D_A'$, the restriction of this
$\beta$-orientation to $G^+[A]$ yields an orientation $D$ of $G$. By \eqref{eq: required boundary}, we have
$$
\begin{aligned}
\partial_D(v)
&\equiv\partial_{D_A'}(v)+\beta(v)-\partial_{D_z}(v)
\equiv0\pmod{k}
&& \forall v\in A,\\
\partial_D(v)
&\equiv\partial_{D_A'}(v)\equiv\partial_{D_A}(v)
\equiv\partial_{D_{G/A}}(v)\equiv0\pmod{k}
&& \forall v\in V(G)\setminus A.
\end{aligned}
$$
Thus $D$ is a modulo $k$-orientation of $G$, a contradiction. This completes the proof of Theorem~\ref{thm: strengthened}.
\end{proof}


\end{document}